\newtheorem{theorem}{Theorem}[section]
\newtheorem{lemma}{Lemma}[section]
\newtheorem{definition}{Definition}[section]
\numberwithin{equation}{section}
\numberwithin{table}{section}
\numberwithin{figure}{section}
\title{On the Largest Prime factor of the $k$--generalized Lucas numbers}
\author{Herbert Batte$^{1,*} $ and Florian Luca$^{2}$}
\date{}
\begin{document}
\maketitle
\abstract{ Let $(L_n^{(k)})_{n\geq 2-k}$ be the sequence of $k$--generalized Lucas numbers for some fixed integer $k\ge 2$ whose first $k$ terms are $0,\ldots,0,2,1$ and each term afterwards is the sum of the preceding $k$ terms. For an integer $m$, let $P(m)$ denote the largest prime factor of $m$, with $P(0)=P(\pm 1)=1$. We show that if $n \ge k + 1$, then $P (L_n^{(k)} ) > (1/86) \log \log n$. Furthermore, we determine all the $k$--generalized Lucas numbers $L_n^{(k)}$ whose largest prime factor is at most $ 7$. } 

{\bf Keywords and phrases}: $k$--generalized Lucas numbers; greatest prime factor; linear forms in logarithms.

{\bf 2020 Mathematics Subject Classification}: 11B39, 11D61, 11D45.

\thanks{$ ^{*} $ Corresponding author}

\section{Introduction}\label{intro}
\subsection{Background}\label{sec:1.1}
Let $k \ge 2$ be an integer. The $k$--generalized Lucas numbers  is the sequence defined by the recurrence relation 
$$ L_n^{(k)} = L_{n-1}^{(k)}+\cdots + L_{n-k}^{(k)},~~\text{for all}~~n\ge 2,   $$
with the initial condition $L_0^{(k)}=2$, $L_1^{(k)}=1$ for all $k\ge 2$ and $L_{2-k}^{(k)}=\cdots=L_{-1}^{(k)}=0$ for all $k\ge 3$. When $k = 2$, this sequence is the classical sequence of Lucas numbers and in this case we omit the superscript ${}^{(k)}$ in the notation.  

For an integer $m$,  let $P(m)$ be the largest prime factor of $m$ with the convention $P(0) = P(\pm 1) = 1$. The challenge of determining lower bounds for the largest prime factor of terms in linear recurrence sequences has sparked much interest among mathematicians. Numerous studies have been conducted on this  topic, see, for example \cite{Brl}. In this work, our focus is on the sequence of $k$--generalized Lucas numbers. Specifically, we aim to derive effective lower bounds for $P(L_n^{(k)})$ in relation to both $k$ and $n$.
We prove the following results.
\subsection{Main Results}\label{sec:1.2}
\begin{theorem}\label{1.1} 
	Let $(L_n^{(k)})_{n\geq 2-k}$ be the sequence of $k$--generalized Lucas numbers. Then, the inequality 
	\begin{align*}
		P(L_n^{(k)})>\dfrac{1}{86}\log \log n,
	\end{align*}
holds for all $n\ge k+1$.
\end{theorem}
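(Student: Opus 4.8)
The plan is to argue by contradiction. Suppose that for some $n\ge k+1$ one has $p:=P(L_n^{(k)})\le \tfrac1{86}\log\log n$. Since $P(L_n^{(k)})\ge 2$ whenever $L_n^{(k)}>1$, the asserted inequality is trivial unless $\log\log n$ is large, so I may assume $\log\log n$ is as large as needed and aim to prove $p\gg\log\log n$ with a constant exceeding $1/86$. Two approximation facts, which I would first establish as lemmas, drive everything. The characteristic polynomial $\Psi_k(x)=x^k-\cdots-1=(x^{k+1}-2x^k+1)/(x-1)$ has a unique root $\alpha=\alpha(k)$ with $\varphi\le\alpha<2$, satisfying $\alpha^k(2-\alpha)=1$ (so $2-\alpha<(2/3)^k$), and all other roots strictly inside the unit disc; hence there is an explicit algebraic number $\lambda_k\in(\tfrac32,2)$ of degree $\le k$ and height $h(\lambda_k)=O(\log k)$ with
\[
\bigl|L_n^{(k)}-\lambda_k\alpha^{\,n-1}\bigr|<\tfrac32\qquad(n\ge1).
\]
Because $\alpha=2-O((2/3)^k)$ and $\lambda_k=\tfrac32+O(k(2/3)^k)$, this yields a second estimate with an absolute constant $c_0$,
\[
\Bigl|\tfrac{L_n^{(k)}}{3\cdot 2^{\,n-2}}-1\Bigr|\le c_0\,n\,(2/3)^k ,
\]
so for $k$ large relative to $\log n$ the integer $L_n^{(k)}$ is extraordinarily close to $3\cdot 2^{n-2}$; one can also use $L_n^{(k)}=2L_{n-1}^{(k)}-L_{n-1-k}^{(k)}$ to obtain, for $k+2\le n\le 2k+1$, the exact shape $L_n^{(k)}=2^{\,n-k-3}\bigl(3\cdot 2^{k+1}-3n+3k-4\bigr)$, the second factor lying within $3k$ of $3\cdot 2^{k+1}$.

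Write $L_n^{(k)}=\prod_{q\le p}q^{a_q}$; from $\tfrac12 n<\log L_n^{(k)}<n$ we get $a_q<n$ for all $q$, and there are $\pi(p)\le p$ primes involved. Fix a threshold $T=T(n)$ growing with $n$ (a fixed power of $\log n$ suffices for all but astronomically large $n$; $T=\exp((\log\log n)^2)$ handles the general case) and split on the size of $k$. \textit{Case $k\le T$:} here $\alpha,\lambda_k$ have degree $\le T$ and $\log k\le(\log\log n)^2$, all sub-polynomial in $n$. Dividing the first displayed inequality by $\lambda_k\alpha^{n-1}$ and taking logarithms, the form $\Lambda=\bigl|\sum_{q\le p}a_q\log q-(n-1)\log\alpha-\log\lambda_k\bigr|$ satisfies $\Lambda<2\alpha^{-(n-1)}$, so $\log(1/\Lambda)>0.48\,n$. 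Matveev's theorem applied to the $\pi(p)+2$ logarithms $\{\log q:q\le p\}\cup\{\log\alpha,\log\lambda_k\}$ — with field degree $D=k$, $A_\alpha=\log 2$, $A_{\lambda_k}=O(k\log k)$, $A_q=k\log q$, $B=\max(\max_q a_q,n-1)<n$ — bounds $\log(1/\Lambda)$ above by $C\,30^{\pi(p)}\pi(p)^{4.5}k^{3+\pi(p)}(\log k)^2(\log n)\prod_{q\le p}\log q$. Comparing the two bounds, inserting $k\le T$ and $\prod_{q\le p}\log q\le(\log p)^{\pi(p)}$, and taking logarithms gives $\log n\ll\pi(p)\,(\log\log n)^2$, hence $p\ge\pi(p)\gg\log n/(\log\log n)^2\gg\log\log n$, contradicting $p\le\tfrac1{86}\log\log n$ with huge room.

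\textit{Case $k> T$:} now $k\gg\log n$, so $c_0n(2/3)^k<\tfrac12$, the second estimate is super-polynomially small, and the relevant form
\[
\Lambda_1=\Bigl|\textstyle\sum_{q\le p}a_q\log q-(n-2)\log 2-\log 3\Bigr|<2c_0\,n\,(2/3)^k
\]
has the decisive feature that all its entries are rational, so Matveev applies in degree $D=1$. Thus $\log(1/\Lambda_1)>k\log\tfrac32-\log n-O(1)>0.4\,\exp((\log\log n)^2)$, while Matveev gives $\log(1/\Lambda_1)<C\,30^{\pi(p)}\pi(p)^{4.5}(\log n)\prod_{q\le p}\log q\le C\,30^{\pi(p)}\pi(p)^{4.5}(\log n)(\log p)^{\pi(p)}$. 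Taking logarithms, and noting that $p\le\tfrac1{86}\log\log n$ makes $\log p$ — hence $\pi(p)\log\log p$ — negligible against $(\log\log n)^2$, one gets $(\log\log n)^2\ll\pi(p)\bigl(\log 30+\log\log\log\log n\bigr)$, so $\pi(p)\gg(\log\log n)^2/\log\log\log\log n\gg\log\log n$, again a contradiction. It is this case, with Matveev's constant $30$ and the factors $\log(3/2),\log 2$ carried through the computation, that pins down the explicit value $1/86$.

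The main obstacle is uniformity in $k$. For large $k$ one cannot feed the natural approximation $L_n^{(k)}\approx\lambda_k\alpha^{n-1}$ into Matveev directly, because the field degree $k$ and the height $O(\log k)$ of $\lambda_k$ enter through factors $k^{\pi(p)}$ and $k\log k$ that, when $k$ is as large as $n$, overwhelm the lower bound $\log(1/\Lambda)>0.48\,n$ and wreck the argument; the remedy is to exploit the near-power-of-$2$ shape of $L_n^{(k)}$ so that the pertinent form involves only rational numbers and Matveev is used in degree $1$. But that device is worthless once $k$ is merely polylogarithmic in $n$, since then $\log(1/\Lambda_1)$ is itself only of order $\log n$ and cancels the $\log B$ term of Matveev's bound; one must then return to the $\alpha$-form, which is now harmless because $k$ is small. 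Choosing $T$ so that these two regimes overlap and cover all $k\le n-1$, proving the two approximation lemmas with fully explicit constants, and carrying out the numerical bookkeeping that produces $1/86$, are the remaining points of substance.
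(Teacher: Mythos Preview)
Your approach is essentially the paper's: the same two approximations ($L_n^{(k)}\approx\lambda_k\alpha^{n-1}$ and, for large $k$, $L_n^{(k)}\approx 3\cdot 2^{n-2}$), a case split on the size of $k$ relative to $n$, and Matveev applied in degree $k$ in one regime and in degree $1$ in the other. The paper splits at $n$ versus $2^{k/2}$ rather than at your threshold $k=\exp((\log\log n)^2)$, and in the small-$k$ regime it uses a bootstrap---first bound $\log n\ll s\log k$ via Matveev, then use $k\le(2/\log 2)\log n$ to obtain $k\ll s\log s$, and substitute back---instead of plugging the threshold straight in; this is what makes the constant fully explicit.

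One point to correct: you say the degree-$1$ case ``pins down the explicit value $1/86$'', but in the paper that case actually yields the sharper bound $p_s>(1/26)\log\log n$. The constant $86$ emerges from the \emph{algebraic} case, where the bootstrap gives $\log n<86\,s\log s$ and hence $p_s>s\log s>(1/86)\log n$. This misattribution does not break your argument---indeed $P(L_n^{(k)})\ge 2>(1/86)\log\log n$ is automatic whenever $n<e^{e^{172}}$, and for larger $n$ both of your cases deliver far more than $(1/86)\log\log n$---but the bookkeeping you defer will not locate the constant where you expect it.
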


\begin{theorem}\label{1.2} 
	The only solutions to the Diophantine equation 
	\begin{align}\label{eq1.1}
	L_n^{(k)}=2^a\cdot 3^b\cdot 5^c\cdot 7^d,
	\end{align}
	 in nonnegative integers $n$, $k$, $a$, $b$, $c$, $d$ with $k \ge 2$ and $n \ge k + 1$, are 
	 \begin{align*}
	 &L_3^{(2)}=4,~~~~ L_4^{(2)}=7,~~~~ L_6^{(2)}=18,~~~~ L_4^{(3)}=10,~~~~ L_6^{(3)}=35, \\
	 &L_7^{(3)}=64,~~~~ L_{12}^{(3)}=1350,~~~~ L_{15}^{(3)}=8400,
	 ~~~~ L_8^{(4)}=160 ~~\text{and}~~ L_{15}^{(10)}=24500.\end{align*} 
	 In addition, $L_{n}^{(k)}=3\cdot 2^{n-2}$ for all $2\le n\le k$. 
\end{theorem}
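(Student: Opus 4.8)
The plan is to translate the equation, through the Binet-type formula for $L_n^{(k)}$, into two linear forms in logarithms --- one over the number field $\mathbb{Q}(\alpha)$ and one over $\mathbb{Q}$ --- to bound $n$ and $k$ effectively, and then to finish with a Baker--Davenport/LLL reduction and a finite check. I would begin by recording the standard facts about the sequence: writing $\alpha=\alpha(k)$ for the dominant root of $x^k-x^{k-1}-\cdots-1$, one has $3/2<\alpha<2$ and $\alpha^k(2-\alpha)=1$ (so $2-\alpha=\alpha^{-k}$ is exponentially small in $k$); there is a Binet-type representation $L_n^{(k)}=f_k(\alpha)\alpha^n+\xi_n$ with $|\xi_n|<3/2$, where $f_k(\alpha)\to 3/4$ and $h(f_k(\alpha))\ll\log k$; and $Dh(\alpha)=\log\alpha$ for $D:=[\mathbb{Q}(\alpha):\mathbb{Q}]=k$. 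Since $n\ge k+1$ forces $L_n^{(k)}<3\cdot2^{n-2}<2^n$, all four exponents are less than $n$, so every integer coefficient appearing below is $O(n)$. (Theorem~\ref{1.1} already guarantees finitely many solutions, but its bound on $n$ is astronomically too large to search, so the estimates must be redone with the specific primes $2,3,5,7$.)

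For $n\ge k+1$, rewriting the equation as $2^a3^b5^c7^d=f_k(\alpha)\alpha^n+\xi_n$ and dividing by $f_k(\alpha)\alpha^n$ shows that $\Lambda_I:=a\log2+b\log3+c\log5+d\log7-n\log\alpha-\log f_k(\alpha)$ satisfies $|\Lambda_I|\ll\alpha^{-n}$. Matveev's theorem over $\mathbb{Q}(\alpha)$ --- where $D=k$, but $Dh(\alpha)=\log\alpha$ is bounded and $Dh(f_k(\alpha))\ll k\log k$ --- together with $\log\alpha>\log(3/2)$, then gives a bound of the shape $n\ll k^7(\log k)^3$; in particular $n\ll2^k$ once $k$ exceeds an absolute constant. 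This last inequality is precisely what the second linear form needs. Put $\Delta_n:=3\cdot2^{n-2}-L_n^{(k)}$; comparing the recurrence for $L_n^{(k)}$ with that for $L_n^{(\infty)}:=3\cdot2^{n-2}$, which satisfies $L_n^{(\infty)}=\sum_{i=1}^kL_{n-i}^{(\infty)}+3\cdot2^{n-k-2}$, shows $\Delta_n>0$ for $n\ge k+1$, while combining $2-\alpha=\alpha^{-k}$ with $f_k(\alpha)=3/4+O(\alpha^{-k})$ gives $\Delta_n\ll n\,2^{n-k}$ whenever $n\ll2^k$. Consequently $2^{a+2}3^b5^c7^d/(3\cdot2^n)=1-\Delta_n/(3\cdot2^{n-2})$ lies within $\ll n\,2^{-k}$ of $1$, so the rational form $\Lambda_{II}:=(a+2-n)\log2+(b-1)\log3+c\log5+d\log7$ is nonzero (as $\Delta_n\ne0$) and satisfies $|\Lambda_{II}|\ll n\,2^{-k}$. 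Matveev over $\mathbb{Q}$, where now $D=1$ and no power of $k$ enters the constant, gives $\log|\Lambda_{II}|\gg-\log n$, hence $k\ll\log n$. Substituting back into $n\ll k^7(\log k)^3$ forces $n\ll(\log n)^7(\log\log n)^3$, which bounds $n$, and therefore $k$, by absolute constants.

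This reduces the theorem to a finite search. For each $k$ below the resulting bound $K_0$ I would fix $\alpha=\alpha(k)$, reapply Matveev to $\Lambda_I$ to obtain an explicit (still large) bound $n<N_0(k)$, run an LLL/continued-fraction reduction on $\Lambda_I$ to bring this down to a few hundred, and then compute and factor $L_n^{(k)}$ for the remaining $n$. Here the closed forms $L_{k+1}^{(k)}=3\cdot2^{k-1}-2$, $L_{k+2}^{(k)}=3\cdot2^k-5$, and $L_{k+j}^{(k)}=2^{j-3}(3\cdot2^{k+1}-(3j+4))$ for $3\le j\le k+1$ account at once for all of the sporadic solutions except $L_6^{(2)},L_{12}^{(3)},L_{15}^{(3)}$, which have $n-k\ge k+2$ and emerge in the reduction; the family $L_n^{(k)}=3\cdot2^{n-2}$ with $2\le n\le k$ is immediate and falls outside the hypothesis $n\ge k+1$.

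I expect the main obstacle to be the bound on $k$. It rests on the sharp estimate $\Delta_n=3\cdot2^{n-2}-L_n^{(k)}\asymp n\,2^{n-k}$, which is valid only once $n$ is known to be small relative to $2^k$; this is why the two linear forms must be applied in the right order --- $\Lambda_I$ first, to secure $n\ll k^7(\log k)^3\ll2^k$, and only then $\Lambda_{II}$. Establishing the convergence rate $f_k(\alpha)=3/4+O(\alpha^{-k})$ precisely enough, and performing the per-$k$ numerical reductions for all $2\le k\le K_0$, are the remaining technical points.
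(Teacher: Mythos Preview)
Your plan matches the paper's proof essentially line for line: the same two linear forms (one over $\mathbb{Q}(\alpha)$, one over $\mathbb{Q}$), applied in the same order for the same reason, followed by LLL reduction and a finite search. The paper's explicit constants are $n<1.4\cdot10^{27}k^7(\log k)^3$ from $\Lambda_I$, and then, for $k>1000$ (which guarantees $n<2^{k/2}$), $k<1.64\cdot10^{20}$ and $n<4.6\cdot10^{173}$ from $\Lambda_{II}$.

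There is one practical gap in your reduction phase. After Matveev, the absolute bound $K_0$ on $k$ is of order $10^{20}$, so doing a per-$k$ LLL on $\Lambda_I$ for all $k\le K_0$ is computationally hopeless. The paper avoids this by observing that $\Lambda_{II}$ is independent of $\alpha$ (its coefficients lie in $\mathbb{Q}$), so a \emph{single} LLL reduction on $\Lambda_{II}$, with the coefficient bound $|x_i|<n<4.6\cdot10^{173}$, cuts $k$ down to $k\le 3466$; feeding this back into the first bound and iterating a couple of times pushes $k$ below $1000$. Only then does the paper run the per-$k$ LLL on $\Lambda_I$, for $k\in[2,1000]$, obtaining $n\le1449$. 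You already have $\Lambda_{II}$ in hand; the missing step is to reduce it by LLL \emph{before} turning to the $k$-dependent form. Your closed-form observation $L_{k+j}^{(k)}=2^{j-3}(3\cdot2^{k+1}-(3j+4))$ for $3\le j\le k+1$ is a nice touch not in the paper, but it does not replace the reduction.
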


\section{Methods}
\subsection{Preliminaries}
It is known that 
\begin{align}\label{eq2.2}
	L_n^{(k)} = 3 \cdot 2^{n-2},\qquad \text{for all}\qquad 2 \le n \le k.
\end{align}
In particular, $P(L_n^{(k)})=3$ for all $n,k$ in the range $2\le n\le k$. Additionally, $L_{k+1}^{(k)}=3\cdot 2^{k-1}-2$ and by induction one proves that 
\begin{equation}
\label{eq:32}
L_n^{(k)}<3\cdot 2^{n-2}\qquad {\text{\rm holds for all}}\qquad n\ge k+1.
\end{equation}
Next, we revisit some properties of the $k$--generalized Lucas numbers. They form a linearly recurrent sequence of characteristic polynomial
\[
\Psi_k(x) = x^k - x^{k-1} - \cdots - x - 1,
\]
which is irreducible over $\mathbb{Q}[x]$. The polynomial $\Psi_k(x)$ possesses a unique real root $\alpha(k)>1$ and all the other roots are inside the unit circle, see \cite{MIL}. The root  $\alpha(k):=\alpha$ is in the interval
\begin{align}\label{eq2.3}
2(1 - 2^{-k} ) < \alpha < 2
\end{align}
as noted in \cite{WOL}. As in the classical case when $k=2$, it was shown in \cite{BRL} that 
\begin{align}\label{eq2.4}
	\alpha^{n-1} \le L_n^{(k)}\le2\alpha^n, \quad \text{holds for all} \quad n\ge1, ~k\ge 2.
\end{align}
Let $k\ge 2$ and define
\begin{equation}
\label{eq:fk}
	f_k(x):=\dfrac{x-1}{2+(k+1)(x-2)}.
\end{equation}
We have 
$$
\frac{df_k(x)}{dx}=-\frac{(k-1)}{(2+(k+1)(x-2))^2}<0\qquad {\text{\rm for~all}}\qquad x>0.
$$
In particular, inequality \eqref{eq2.3} implies that
\begin{align}\label{eq2.5}
	\dfrac{1}{2}=f_k(2)<f_k(\alpha)<f_k(2(1 - 2^{-k} ))\le \dfrac{3}{4},
\end{align}
for all $k\ge 3$. It is easy to check that the above inequality holds for $k=2$ as well. Further, it is easy to verify that $|f_k(\alpha_i)|<1$, for all $2\le i\le k$, where $\alpha_i$ are the remaining 
roots of $\Psi_k(x)$ for $i=2,\ldots,k$.

The following lemma will be useful in our applications of Baker's theory. It is Lemma 2 in \cite{GoL}.
\begin{lemma}[Lemma 2, \cite{GoL}]\label{lem2.1}
	For all $k \ge 2$, the number $f_k (\alpha)$ is not an algebraic integer.
\end{lemma}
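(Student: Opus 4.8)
\emph{Proof idea.} The plan is to show that the $\mathbb{Q}$-norm of $f_k(\alpha)$ is a rational number that fails to be a rational integer; since the norm of an algebraic integer is a rational integer, this will give the claim. First I would clear the denominator of $f_k$: from $2+(k+1)(x-2)=(k+1)x-2k$ we get $f_k(\alpha)=\dfrac{\alpha-1}{(k+1)\alpha-2k}$. The substitution $x\mapsto f_k(x)$ is a fractional linear map of determinant $1\cdot(-2k)-(-1)(k+1)=-(k-1)\neq 0$, hence invertible over $\mathbb{Q}$; therefore $\mathbb{Q}(f_k(\alpha))=\mathbb{Q}(\alpha)$ has degree $k$, and the conjugates of $f_k(\alpha)$ over $\mathbb{Q}$ are precisely $f_k(\alpha_1),\ldots,f_k(\alpha_k)$, where $\alpha=\alpha_1,\alpha_2,\ldots,\alpha_k$ are the roots of $\Psi_k$. (These values are defined and pairwise distinct because $\Psi_k$ is irreducible of degree $k\ge 2$, hence has no rational root and is separable.) Consequently, if $f_k(\alpha)$ were an algebraic integer, then $N:=N_{\mathbb{Q}(\alpha)/\mathbb{Q}}(f_k(\alpha))=\prod_{i=1}^{k}f_k(\alpha_i)$ would lie in $\mathbb{Z}$.

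The next step is to compute $N$ in closed form. Using $\prod_{i=1}^{k}(\alpha_i-c)=(-1)^{k}\Psi_k(c)$ on numerator and denominator separately, together with the identity $(x-1)\Psi_k(x)=x^{k+1}-2x^{k}+1$, one finds $\prod_i(\alpha_i-1)=(-1)^k\Psi_k(1)=(-1)^k(1-k)$ and, evaluating $\Psi_k$ at $\frac{2k}{k+1}=2-\frac{2}{k+1}$,
\[
\Psi_k\!\left(\tfrac{2k}{k+1}\right)=\frac{(k+1)^{k+1}-2(2k)^{k}}{(k-1)(k+1)^{k}},\qquad\text{so}\qquad \prod_{i=1}^{k}\bigl((k+1)\alpha_i-2k\bigr)=(-1)^{k}\,\frac{(k+1)^{k+1}-2(2k)^{k}}{k-1}.
\]
Dividing, the signs cancel and I expect to obtain the clean formula
\[
N_{\mathbb{Q}(\alpha)/\mathbb{Q}}\bigl(f_k(\alpha)\bigr)=\frac{(k-1)^2}{2(2k)^{k}-(k+1)^{k+1}}.
\]

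It then remains to observe that this value is never a rational integer for $k\ge 2$. An elementary estimate shows $2(2k)^{k}>(k+1)^{k+1}$ for all $k\ge 2$ (directly for $k=2,3$, and comfortably for $k\ge 4$), and moreover $2(2k)^{k}-(k+1)^{k+1}>(k-1)^2\ge 1$; hence $0<N_{\mathbb{Q}(\alpha)/\mathbb{Q}}(f_k(\alpha))<1$, so it is not in $\mathbb{Z}$, a contradiction. Therefore $f_k(\alpha)$ is not an algebraic integer. The only step demanding genuine computation is the evaluation of $\Psi_k$ at $\frac{2k}{k+1}$ (equivalently, the norm of $(k+1)\alpha-2k$); once that closed form is in hand everything else is routine bookkeeping, so I do not foresee a serious obstacle. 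As a cross-check, for $k=2$ the formula gives $N=\tfrac15$, matching the value $\tfrac{5+\sqrt5}{10}$ of $f_2(\alpha)$ at the golden ratio.
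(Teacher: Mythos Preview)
The paper does not actually prove this lemma; it merely quotes it from G\'omez--Ruiz and Luca (reference \cite{GoL}) without argument. So there is no ``paper's own proof'' to compare against. That said, your proof is correct and self-contained: the norm computation via $(x-1)\Psi_k(x)=x^{k+1}-2x^k+1$ evaluated at $x=\tfrac{2k}{k+1}$ is accurate, the resulting formula
\[
N_{\mathbb{Q}(\alpha)/\mathbb{Q}}\bigl(f_k(\alpha)\bigr)=\frac{(k-1)^2}{2(2k)^k-(k+1)^{k+1}}
\]
checks out (and your $k=2$ sanity check $N=1/5$ is right), and the bound $0<N<1$ follows from the easy inequality $2^{k+1}k^{k}>(k+1)^{k+1}+(k-1)^{2}$, which one verifies directly for $k=2,3$ and for $k\ge 3$ from $2^{k+1}>e(k+1)\ge (k+1)(1+1/k)^{k}$. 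The only cosmetic point worth tightening is the line ``comfortably for $k\ge 4$'': it would be cleaner to record the explicit chain $2(2k)^k-(k+1)^{k+1}\ge k^{k}\bigl(2^{k+1}-e(k+1)\bigr)\ge 5k^{k}>(k-1)^{2}$ for $k\ge 3$, so that the estimate is not left as an assertion.
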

Moreover, it was shown in \cite{BRL} that
\begin{align}\label{eq2.6}
	L_n^{(k)}=\displaystyle\sum_{i=1}^{k}(2\alpha_i-1)f_k(\alpha_i)\alpha_i^{n-1}~~\text{and}~~\left|L_n^{(k)}-f_k(\alpha)(2\alpha-1)\alpha^{n-1}\right|<\dfrac{3}{2},
\end{align}
for all $k\ge 2$ and $n\ge 2-k$. This means that 
\begin{equation}
\label{eq:Lnwitherror}
L_n^{(k)}=f_k(\alpha)(2\alpha-1)\alpha^{n-1}+e_k(n), \qquad {\text{\rm where}}\qquad |e_k(n)|<1.5.
\end{equation} 
The left expression in \eqref{eq2.6} is known as the Binet-like formula for $L_n^{(k)}$. Furthermore, the right inequality expression in \eqref{eq2.6} shows that the contribution of the zeros that are inside the unit circle to $L_n^{(k)}$ is small. 

Next, let $n\ge 3$ and $L_n^{(k)}=p_1^{\beta_1}\cdot p_2^{\beta_2}\cdots p_s^{\beta_s}$ be the prime factorization of the positive integer $L_n^{(k)}$, where $2=p_1<p_2<\cdots <p_s$ is the increasing sequence of prime numbers and the numbers $\beta_i$, for $i=1,2,\ldots, s$, are nonnegative integers. By the right-hand side of relation \eqref{eq2.4}, we have 
\[
\log L_n^{(k)} \le \log 2+n\log\alpha<(n+1)\log2,
\]
since $\alpha<2$, for all $k\ge 2$ in \eqref{eq2.3}. Therefore, we can write
\begin{align*}
	\log\prod_{i=1}^s p_i^{\beta_i} & =\sum_{i=1}^{s}\beta_i\log p_i =\log L_n^{(k)}< (n+1)\log 2,\\	
\sum_{i=1}^{s}\beta_i\log 2 & \le \sum_{i=1}^{s}\beta_i\log p_i <(n+1)\log 2, \quad \text{since}\quad 2\le p_i, \quad \text{for all}\quad i=1,2,\ldots s,
\end{align*}
from which we get
$$
\sum_{i=1}^{s}\beta_i <n+1.
$$
In particular, 
\begin{align}\label{eq2.7}
	\beta_i <n+1, 
\end{align}
for all $i=1,2,\ldots s$. Lastly, for $k \geq 3$ one checks that  $1/\log \alpha\le 2$ by using $\alpha\ge 2(1-1/2^{k})\ge 2(1-1/2^3)$. When $ k = 2$, the number $\alpha$ represents the golden ratio for which  $1/\log\alpha<2.1.$  Thus, the inequality
\begin{align}\label{eq2.8}
\frac{1}{\log \alpha} < 2.1\quad {\text{\rm holds~for~all}}\quad k\ge 2.
\end{align}

\subsection{Linear forms in logarithms}
We use Baker-type lower bounds for nonzero linear forms in logarithms of algebraic numbers. There are many such bounds mentioned in the literature but we use one of Matveev from \cite{MAT}. Before we can formulate such inequalities, we need the notion of height of an algebraic number recalled below.  

\begin{definition}\label{def2.1}
	Let $ \gamma $ be an algebraic number of degree $ d $ with minimal primitive polynomial over the integers $$ a_{0}x^{d}+a_{1}x^{d-1}+\cdots+a_{d}=a_{0}\prod_{i=1}^{d}(x-\gamma^{(i)}), $$ where the leading coefficient $ a_{0} $ is positive. Then, the logarithmic height of $ \gamma$ is given by $$ h(\gamma):= \dfrac{1}{d}\Big(\log a_{0}+\sum_{i=1}^{d}\log \max\{|\gamma^{(i)}|,1\} \Big). $$
\end{definition}
In particular, if $ \gamma$ is a rational number represented as $\gamma=p/q$ with coprime integers $p$ and $ q\ge 1$, then $ h(\gamma ) = \log \max\{|p|, q\} $. 
The following properties of the logarithmic height function $ h(\cdot) $ will be used in the rest of the paper without further reference:
\begin{equation}\nonumber
	\begin{aligned}
		h(\gamma_{1}\pm\gamma_{2}) &\leq h(\gamma_{1})+h(\gamma_{2})+\log 2;\\
		h(\gamma_{1}\gamma_{2}^{\pm 1} ) &\leq h(\gamma_{1})+h(\gamma_{2});\\
		h(\gamma^{s}) &= |s|h(\gamma)  \quad {\text{\rm valid for}}\quad s\in \mathbb{Z}.
	\end{aligned}
\end{equation}
With these properties, it was easily computed in Section 3, equation (12) of \cite{Brl} that
\begin{align}\label{eq2.9}
	h\left(f_k(\alpha)\right)<3\log k, ~~\text{for all}~~k\ge 2.
\end{align}

A linear form in logarithms is an expression
\begin{equation}
	\label{eq:Lambda}
	\Lambda:=b_1\log \gamma_1+\cdots+b_t\log \gamma_t,
\end{equation}
where for us $\gamma_1,\ldots,\gamma_t$ are positive real  algebraic numbers and $b_1,\ldots,b_t$ are integers. We assume, $\Lambda\ne 0$. We need lower bounds 
for $|\Lambda|$. We write ${\mathbb K}:={\mathbb Q}(\gamma_1,\ldots,\gamma_t)$ and $D$ for the degree of ${\mathbb K}$ over ${\mathbb Q}$.
We give Matveev's inequality from \cite{MAT}. 

\begin{theorem}[Matveev, \cite{MAT}]
	\label{thm:Mat} 
	Put $\Gamma:=\gamma_1^{b_1}\cdots \gamma_t^{b_t}-1=e^{\Lambda}-1$. Then 
	$$
	\log |\Gamma|>-1.4\cdot 30^{t+3}\cdot t^{4.5} \cdot D^2 (1+\log D)(1+\log B)A_1\cdots A_t,
	$$
	where $B\ge \max\{|b_1|,\ldots,|b_t|\}$ and $A_i\ge \max\{Dh(\gamma_i),|\log \gamma_i|,0.16\}$ for $i=1,\ldots,t$.
\end{theorem}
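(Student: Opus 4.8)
The statement to be established is Matveev's theorem, a cornerstone of the Gelfond--Baker theory of linear forms in logarithms, so I will only outline the method, which refines the ideas of Baker, Fel'dman, Baker--W\"ustholz and Waldschmidt. The plan is to normalize so that $B=|b_t|=\max_i|b_i|$ and then, via a Kummer-theoretic reduction, to pass to the case in which $\gamma_1,\ldots,\gamma_t$ are multiplicatively independent (the presence of multiplicative relations only shrinks the ambient field and helps the bound). I would then argue by contradiction: assume that $\log|\Gamma|=\log|e^{\Lambda}-1|$, and hence $|\Lambda|$, is smaller than the asserted bound, abbreviate $\Omega:=A_1\cdots A_t$ and $E:=1.4\cdot 30^{t+3}t^{4.5}D^2(1+\log D)$, so that the hypothesis reads $\log|\Gamma|<-E(1+\log B)\Omega$. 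The goal is to manufacture an algebraic quantity $\Delta$ that is simultaneously (i) nonzero, (ii) analytically minuscule precisely because $\Lambda$ is so small, and (iii) not too small by a Liouville inequality, and then to calibrate the auxiliary parameters so that (ii) overwhelms (iii), contradicting the hypothesis.

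\textbf{Construction and analytic estimate.} For $\Delta$ I would use either an auxiliary function built by Siegel's lemma or, more transparently, a Laurent-style interpolation determinant: fix integers $L_0,L_1,\ldots,L_t$ and form the square matrix whose columns are indexed by the monomials $z_0^{\lambda_0}\gamma_1^{\lambda_1 z}\cdots\gamma_t^{\lambda_t z}$ with $0\le\lambda_i<L_i$, and whose rows are indexed by pairs (evaluation point $\times$ order of $z$-derivative), the point set being translates $z=s$ of the one-parameter subgroup carrying the data $(1,\log\gamma_1,\ldots,\log\gamma_t)$, with $z_0$ specialized along this line in terms of the $b_i$. Because $\Lambda$ is tiny, all of the factors $\gamma_i^{s}=e^{s\log\gamma_i}$ are, up to exponentially small errors, powers of one and the same quantity, so the rows of the determinant are nearly linearly dependent; expanding along this near-dependence and estimating the $z$-derivatives by Cauchy's inequalities yields an upper bound of the shape $\log|\Delta|\le(\text{combinatorial volume term})-(\text{number of derivatives})\cdot|\Lambda|\cdot(\cdots)$, so that under the contradiction hypothesis $\Delta$ is analytically extremely small.

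\textbf{Nonvanishing, Liouville bound, and optimization.} The crucial structural step is $\Delta\neq 0$, which I would deduce from a \emph{zero (multiplicity) estimate} on the commutative algebraic group $\mathbb{G}_a\times\mathbb{G}_m^{t}$ (the extra additive factor accommodating the Fel'dman polynomials): a nonzero polynomial of controlled multidegree cannot vanish to the prescribed order at the prescribed translates of our one-parameter subgroup unless it vanishes identically, which the monomial basis precludes — and here multiplicative independence of the $\gamma_i$, secured by the Kummer descent, is exactly what keeps the subgroup Zariski-dense and the relevant radical extensions $\mathbb{K}(\gamma_i^{1/p})$ of large degree. Since $\Delta$ then lies in a number field of degree $O(D)$ and is nonzero, the product formula gives a Liouville lower bound $\log|\Delta|\ge -c\,(\deg)\,(\text{heights of the entries})$, the entry heights being governed by the $A_i$ and by $\log B$. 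Comparing this with the analytic upper bound of the previous step and optimizing the integer parameters $L_0,\ldots,L_t$, the number of points and the number of derivatives — this is where the volume of lattice points in a $t$-dimensional simplex produces the $t^{4.5}$, and where the $D^2(1+\log D)$ and the base $30$ in the factor $30^{t+3}$ get pinned down — forces a numerical contradiction, proving the theorem.

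\textbf{Main obstacle.} I expect the genuinely hard parts to be, first, establishing the multiplicity estimate on $\mathbb{G}_a\times\mathbb{G}_m^{t}$ with constants explicit and uniform in $t$ and in all the data, and second, the delicate simultaneous calibration of the many auxiliary parameters so that the analytic smallness strictly dominates the Liouville bound while the final constant stays as small as $1.4\cdot30^{t+3}t^{4.5}$; the Kummer-theoretic descent to the multiplicatively independent case, together with the control it requires of the degrees of the auxiliary Kummer extensions, is the subtle arithmetic ingredient that makes these clean constants attainable.
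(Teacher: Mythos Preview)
The paper does not prove this statement at all: Theorem~\ref{thm:Mat} is quoted from Matveev~\cite{MAT} as a black-box tool, and the paper's own work begins only when it \emph{applies} this bound to the linear forms arising from $L_n^{(k)}$. So there is no ``paper's own proof'' to compare your proposal against.

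That said, your outline is a faithful high-level sketch of the Baker--Matveev method as it is actually carried out in~\cite{MAT}: the Kummer descent to multiplicatively independent $\gamma_i$, the interpolation-determinant (or auxiliary-function) construction on $\mathbb{G}_a\times\mathbb{G}_m^t$, the zero/multiplicity estimate to secure nonvanishing, the Liouville lower bound, and the final parameter optimization are precisely the ingredients Matveev assembles. You correctly flag the two genuinely delicate points --- the explicit multiplicity estimate with constants uniform in $t$, and the calibration that yields the specific constant $1.4\cdot 30^{t+3}t^{4.5}$ --- as the places where the real work lies; each of these occupies a substantial portion of Matveev's paper and cannot be reduced to a paragraph. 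For the purposes of the present paper, however, no proof is expected: the theorem is invoked, not established.
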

During the calculations, upper bounds on the variables are obtained which are too large, thus there is need to reduce them. To do so, we use some results from
approximation lattices and the so--called LLL--reduction method from \cite{LLL}. We explain this in the following subsection.

\subsection{Reduced Bases for Lattices and LLL--reduction methods}\label{sec2.3}
Let $k$ be a positive integer. A subset $\mathcal{L}$ of the $k$--dimensional real vector space ${ \mathbb{R}^k}$ is called a lattice if there exists a basis $\{b_1, b_2, \ldots, b_k \}$ of $\mathbb{R}^k$ such that
\begin{align*}
	\mathcal{L} = \sum_{i=1}^{k} \mathbb{Z} b_i = \left\{ \sum_{i=1}^{k} r_i b_i \mid r_i \in \mathbb{Z} \right\}.
\end{align*}
We say that $b_1, b_2, \ldots, b_k$ form a basis for $\mathcal{L}$, or that they span $\mathcal{L}$. We
call $k$ the rank of $ \mathcal{L}$. The determinant $\text{det}(\mathcal{L})$, of $\mathcal{L}$ is defined by
\begin{align*}
	\text{det}(\mathcal{L}) = | \det(b_1, b_2, \ldots, b_k) |,
\end{align*}
with the $b_i$'s being written as column vectors. This is a positive real number that does not depend on the choice of the basis (see \cite{Cas}, Section 1.2).

Given linearly independent vectors $b_1, b_2, \ldots, b_k$ in $ \mathbb{R}^k$, we refer back to the Gram--Schmidt orthogonalization technique. This method allows us to inductively define vectors $b^*_i$ (with $1 \leq i \leq k$) and real coefficients $\mu_{i,j}$ (for $1 \leq j \leq i \leq k$). Specifically,
\begin{align*}
	b^*_i &= b_i - \sum_{j=1}^{i-1} \mu_{i,j} b^*_j,~~~
	\mu_{i,j} = \dfrac{\langle b_i, b^*_j\rangle }{\langle b^*_j, b^*_j\rangle},
\end{align*}
where \( \langle \cdot , \cdot \rangle \)  denotes the ordinary inner product on \( \mathbb{R}^k \). Notice that \( b^*_i \) is the orthogonal projection of \( b_i \) on the orthogonal complement of the span of \( b_1, \ldots, b_{i-1} \), and that \( \mathbb{R}b_i \) is orthogonal to the span of \( b^*_1, \ldots, b^*_{i-1} \) for \( 1 \leq i \leq k \). It follows that \( b^*_1, b^*_2, \ldots, b^*_k \) is an orthogonal basis of \( \mathbb{R}^k \). 
\begin{definition}
	The basis $b_1, b_2, \ldots, b_n$ for the lattice $\mathcal{L}$ is called reduced if
	\begin{align*}
		\| \mu_{i,j} \| &\leq \frac{1}{2}, \quad \text{for} \quad 1 \leq j < i \leq n,~~
		\text{and}\\
		\|b^*_{i}+\mu_{i,i-1} b^*_{i-1}\|^2 &\geq \frac{3}{4}\|b^*_{i-1}\|^2, \quad \text{for} \quad 1 < i \leq n,
	\end{align*}
	where $ \| \cdot \| $ denotes the ordinary Euclidean length. The constant $ {3}/{4}$ above is arbitrarily chosen, and may be replaced by any fixed real number $ y $ in the interval ${1}/{4} < y < 1$ (see \cite{LLL}, Section 1).
\end{definition}
Let $\mathcal{L}\subseteq\mathbb{R}^k$ be a $k-$dimensional lattice  with reduced basis $b_1,\ldots,b_k$ and denote by $B$ the matrix with columns $b_1,\ldots,b_k$. 
We define
\[
l\left( \mathcal{L},y\right)= \left\{ \begin{array}{c}
	\min_{x\in \mathcal{L}}||x-y|| \quad  ;~~ y\not\in \mathcal{L}\\
	\min_{0\ne x\in \mathcal{L}}||x|| \quad  ;~~ y\in \mathcal{L}
\end{array}
\right.,
\]
where $||\cdot||$ denotes the Euclidean norm on $\mathbb{R}^k$. It is well known that, by applying the
LLL--algorithm, it is possible to give in polynomial time a lower bound for $l\left( \mathcal{L},y\right)$, namely a positive constant $c_1$ such that $l\left(\mathcal{L},y\right)\ge c_1$ holds (see \cite{SMA}, Section V.4).
\begin{lemma}\label{lem2.5}
	Let $y\in\mathbb{R}^k$ and $z=B^{-1}y$ with $z=(z_1,\ldots,z_k)^T$. Furthermore, 
	\begin{enumerate}[(i)]
		\item if $y\not \in \mathcal{L}$, let $i_0$ be the largest index such that $z_{i_0}\ne 0$ and put $\sigma:=\{z_{i_0}\}$, where $\{\cdot\}$ denotes the distance to the nearest integer.
		\item if $y\in \mathcal{L}$, put $\sigma:=1$.
	\end{enumerate}
	\noindent Finally, let 
	\[
	c_2:=\max\limits_{1\le j\le k}\left\{\dfrac{||b_1||^2}{||b_j^*||^2}\right\}.
	\]
	Then, 
	\[
	l\left( \mathcal{L},y\right)^2\ge c_2^{-1}\sigma^2||b_1||^2:=c_1^2.
	\]
\end{lemma}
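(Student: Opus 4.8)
The plan is to reduce the whole statement to the orthogonality of the Gram--Schmidt basis $b_1^{*},\ldots,b_k^{*}$, following the standard argument (cf. \cite{SMA}, Section V.4). I would first treat the generic case $y\notin\mathcal{L}$. If $\sigma=0$ there is nothing to prove, so I may assume $z_{i_0}\notin\mathbb{Z}$. Fix an arbitrary lattice point $x=\sum_{i=1}^{k}r_ib_i$ with $r_i\in\mathbb{Z}$ and write $y=\sum_{i=1}^{k}z_ib_i$, which is precisely the meaning of $z=B^{-1}y$. Since $y\notin\mathcal{L}$ while $x\in\mathcal{L}$, we have $x\ne y$, so by linear independence of $b_1,\ldots,b_k$ there is a largest index $p$ with $r_p\ne z_p$.

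The key step is the inequality $p\ge i_0$. Indeed, the choice of $i_0$ gives $z_i=0$ for all $i>i_0$; if we had $p<i_0$, then $r_{i_0}=z_{i_0}$ by maximality of $p$, contradicting $r_{i_0}\in\mathbb{Z}$ and $z_{i_0}\notin\mathbb{Z}$. Hence $|r_p-z_p|\ge\sigma$: when $p=i_0$ this is the definition $\sigma=\{z_{i_0}\}$, and when $p>i_0$ we have $z_p=0$ and $r_p$ is a nonzero integer, so $|r_p-z_p|=|r_p|\ge 1\ge\sigma$. Now I would expand $x-y=\sum_{i=1}^{k}(r_i-z_i)b_i$ and substitute $b_i=b_i^{*}+\sum_{j<i}\mu_{i,j}b_j^{*}$; since $r_i-z_i=0$ for $i>p$, the coefficient of $b_p^{*}$ in the resulting expansion is exactly $r_p-z_p$. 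Using that $b_1^{*},\ldots,b_k^{*}$ are pairwise orthogonal, $\|x-y\|^2\ge(r_p-z_p)^2\|b_p^{*}\|^2\ge\sigma^2\|b_p^{*}\|^2$, and from the definition of $c_2$ we get $\|b_p^{*}\|^2\ge c_2^{-1}\|b_1\|^2$. Combining these and minimizing over $x\in\mathcal{L}$ yields $l(\mathcal{L},y)^2\ge c_2^{-1}\sigma^2\|b_1\|^2=c_1^2$.

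For the case $y\in\mathcal{L}$ I would argue similarly: take any $x=\sum_{i=1}^{k}r_ib_i\ne 0$ in $\mathcal{L}$, let $p$ be the largest index with $r_p\ne 0$, and note that the same Gram--Schmidt expansion makes the coefficient of $b_p^{*}$ in $x$ equal to the nonzero integer $r_p$. Then $\|x\|^2\ge r_p^2\|b_p^{*}\|^2\ge\|b_p^{*}\|^2\ge c_2^{-1}\|b_1\|^2$, which is $c_1^2$ because here $\sigma=1$; minimizing over the nonzero $x$ completes the proof.

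I do not anticipate a real obstacle: the only point needing care is the inequality $p\ge i_0$, which is exactly what forces the relevant Gram--Schmidt coordinate of $x-y$ to have absolute value at least $\sigma$. Everything else is bookkeeping with the orthogonal decomposition. It is worth noting that the reducedness of the basis is not used in establishing the displayed inequality itself; its role is only to guarantee that in the intended applications the constant $c_1$ comes out large enough to be useful.
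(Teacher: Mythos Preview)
Your argument is correct and is precisely the standard Gram--Schmidt projection proof one finds in the cited reference \cite{SMA}, Section~V.4. Note, however, that the paper itself does not supply a proof of this lemma at all: it is stated as a known result and attributed to \cite{SMA}, so there is no ``paper's own proof'' to compare against beyond that citation. Your remark that LLL--reducedness plays no role in the inequality itself (only in making $c_2$ small in applications) is also accurate.
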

In our application, we are given real numbers $\eta_0,\eta_1,\ldots,\eta_k$ which are linearly independent over $\mathbb{Q}$ and two positive constants $c_3$ and $c_4$ such that 
\begin{align}\label{2.9}
	|\eta_0+x_1\eta_1+\cdots +x_k \eta_k|\le c_3 \exp(-c_4 H),
\end{align}
where the integers $x_i$ are bounded as $|x_i|\le X_i$ with $X_i$ given upper bounds for $1\le i\le k$. We write $X_0:=\max\limits_{1\le i\le k}\{X_i\}$. The basic idea in such a situation, due to \cite{Weg}, is to approximate the linear form \eqref{2.9} by an approximation lattice. So, we consider the lattice $\mathcal{L}$ generated by the columns of the matrix
$$ \mathcal{A}=\begin{pmatrix}
	1 & 0 &\ldots& 0 & 0 \\
	0 & 1 &\ldots& 0 & 0 \\
	\vdots & \vdots &\vdots& \vdots & \vdots \\
	0 & 0 &\ldots& 1 & 0 \\
	\lfloor C\eta_1\rfloor & \lfloor C\eta_2\rfloor&\ldots & \lfloor C\eta_{k-1}\rfloor& \lfloor C\eta_{k} \rfloor
\end{pmatrix} ,$$
where $C$ is a large constant usually of the size of about $X_0^k$ . Let us assume that we have an LLL--reduced basis $b_1,\ldots, b_k$ of $\mathcal{L}$ and that we have a lower bound $l\left(\mathcal{L},y\right)\ge c_1$ with $y:=(0,0,\ldots,-\lfloor C\eta_0\rfloor)$. Note that $ c_1$ can be computed by using the results of Lemma \ref{lem2.5}. Then, with these notations the following result  is Lemma VI.1 in \cite{SMA}.
\begin{lemma}[Lemma VI.1 in \cite{SMA}]\label{lem2.6}
	Let $S:=\displaystyle\sum_{i=1}^{k-1}X_i^2$ and $T:=\dfrac{1+\sum_{i=1}^{k}X_i}{2}$. If $c_1^2\ge T^2+S$, then inequality \eqref{2.9} implies that we either have $x_1=x_2=\cdots=x_{k-1}=0$ and $x_k=-\dfrac{\lfloor C\eta_0 \rfloor}{\lfloor C\eta_k \rfloor}$, or
	\[
	H\le \dfrac{1}{c_4}\left(\log(Cc_3)-\log\left(\sqrt{c_1^2-S}-T\right)\right).
	\]
\end{lemma}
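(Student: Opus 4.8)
The plan is to translate inequality \eqref{2.9} into a statement about distances in the lattice $\mathcal{L}$ spanned by the columns of $\mathcal{A}$, and then to confront the upper bound this yields on a particular lattice distance with the LLL--based lower bound $l(\mathcal{L},y)\ge c_1$ furnished by Lemma \ref{lem2.5}. Concretely, write $\mathbf{x}=(x_1,\dots,x_k)^{T}$ for the vector of integers occurring in \eqref{2.9} and form the lattice point $\lambda:=\mathcal{A}\mathbf{x}\in\mathcal{L}$. From the shape of $\mathcal{A}$, the first $k-1$ coordinates of $\lambda$ are $x_1,\dots,x_{k-1}$, and its last coordinate is $\sum_{i=1}^{k}x_i\lfloor C\eta_i\rfloor$; hence, with $y=(0,\dots,0,-\lfloor C\eta_0\rfloor)^{T}$,
\[
\lambda-y=\Bigl(x_1,\dots,x_{k-1},\ \lfloor C\eta_0\rfloor+\sum_{i=1}^{k}x_i\lfloor C\eta_i\rfloor\Bigr),
\]
whose last coordinate I will call $\psi$.

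Next I would estimate $\psi$ from above and from below. For the upper estimate, replacing each $\lfloor C\eta_i\rfloor$ by $C\eta_i$ changes the $i$-th term by less than its weight ($1$ for the $\eta_0$--term, $|x_i|\le X_i$ for the others), so $\psi=C\bigl(\eta_0+\sum_{i=1}^{k}x_i\eta_i\bigr)+\delta$ with $|\delta|\le T$ --- this is exactly where the precise form $T=\tfrac12\bigl(1+\sum_i X_i\bigr)$ enters, as the bound on the accumulated rounding error when the entries of $\mathcal{A}$ are rounded to nearest integers. Combined with \eqref{2.9} this gives $|\psi|\le Cc_3\exp(-c_4H)+T$. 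For the lower estimate, split into cases. If $\lambda=y$, then comparing coordinates forces $x_1=\dots=x_{k-1}=0$ and $x_k\lfloor C\eta_k\rfloor=-\lfloor C\eta_0\rfloor$, i.e.\ $x_k=-\lfloor C\eta_0\rfloor/\lfloor C\eta_k\rfloor$, which is the first alternative in the statement. If $\lambda\ne y$, then $\lambda-y$ is a nonzero vector of $\mathcal{L}$ when $y\in\mathcal{L}$, and $\|\lambda-y\|\ge l(\mathcal{L},y)$ automatically when $y\notin\mathcal{L}$; in either case $\|\lambda-y\|\ge c_1$. Since $\|\lambda-y\|^{2}=\sum_{i=1}^{k-1}x_i^{2}+\psi^{2}\le S+\psi^{2}$, we get $\psi^{2}\ge c_1^{2}-S$, and the hypothesis $c_1^{2}\ge T^{2}+S$ makes the right-hand side at least $T^{2}>0$, so $|\psi|\ge\sqrt{c_1^{2}-S}$.

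Putting the two estimates together gives $\sqrt{c_1^{2}-S}-T\le Cc_3\exp(-c_4H)$; the left-hand side is nonnegative by the preceding line, and if it is $0$ there is nothing to prove, while otherwise both sides are positive, so taking logarithms and solving for $H$ yields precisely
\[
H\le\frac{1}{c_4}\Bigl(\log(Cc_3)-\log\bigl(\sqrt{c_1^{2}-S}-T\bigr)\Bigr).
\]
There is no deep obstacle here; the argument is short and essentially a careful piece of bookkeeping. The one step that must be done with care is the rounding estimate yielding the constant $T$ in exactly the stated shape, and one must not overlook the degenerate configuration $\lambda=y$, which is why the conclusion is a dichotomy rather than an unconditional bound on $H$. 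The scheme is effective only because $c_1$ itself is computable: one runs the LLL--algorithm on the columns of $\mathcal{A}$ to obtain a reduced basis and then applies Lemma \ref{lem2.5} to read off an admissible $c_1$.
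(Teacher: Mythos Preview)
The paper does not prove this lemma at all: it is quoted verbatim as Lemma~VI.1 from Smart's book \cite{SMA} and used as a black box, so there is no ``paper's own proof'' to compare against. Your argument is the standard one underlying that result and is correct in outline and detail. One small caveat: the constant $T=\tfrac12\bigl(1+\sum_i X_i\bigr)$ in the exact form stated requires that $\lfloor\cdot\rfloor$ denote rounding to the \emph{nearest} integer (as you note), whereas the paper's notation suggests the floor function; with ordinary floors the rounding error $|\delta|$ is bounded by $2T$ rather than $T$, which would change the constant but not the shape of the conclusion. This is a notational ambiguity inherited from the source rather than a flaw in your reasoning.
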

Finally, we present an analytic argument which is Lemma 7 in \cite{GL}.  
\begin{lemma}[Lemma 7 in \cite{GL}]\label{Guz} If $ m \geq 1 $, $T > (4m^2)^m$ and $T > \displaystyle \frac{x}{(\log x)^m}$, then $$x < 2^m T (\log T)^m.$$	
\end{lemma}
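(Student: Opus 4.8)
The statement is a purely analytic lemma with no number theory in it, so the plan is an elementary argument by contradiction that bootstraps the hypothesis $T>x/(\log x)^m$, equivalently $x<T(\log x)^m$, against the desired conclusion. I would assume for contradiction that $x\ge 2^mT(\log T)^m$; note that since $m\ge 1$ and $T>(4m^2)^m\ge 4$ we get $2^mT(\log T)^m>2\cdot 4\cdot 1=8$, so $x>8$ and in particular $\log x>1$, which disposes of all domain technicalities at once.

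Next I would run the bootstrap. Combining $x\ge 2^mT(\log T)^m$ with $x<T(\log x)^m$ gives $2^m(\log T)^m<(\log x)^m$, hence (taking $m$-th roots, both sides positive) $\log x>2\log T$, i.e.\ $x>T^2$. Feeding $x>T^2$ back into $x<T(\log x)^m$ gives $T^2<T(\log x)^m$, so $T<(\log x)^m$; since $T>(4m^2)^m$ this forces $\log x>4m^2$. Using $T<(\log x)^m$ once more in $x<T(\log x)^m$ yields $x<(\log x)^{2m}$, that is, $\log x<2m\log\log x$. Writing $u:=\log x$, I now have simultaneously $u>4m^2$ and $u<2m\log u$.

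The remaining task, and the only spot needing care, is to show these two inequalities are incompatible, i.e.\ that $u>4m^2$ forces $u\ge 2m\log u$. The clean way is the elementary estimate $\log w\le w/e$ for all $w>0$ (maximize $\log w-w/e$ at $w=e$): applied with $w=\sqrt u$ it gives $\log u\le 2\sqrt u/e$, hence $2m\log u\le (4m/e)\sqrt u$. Since $u>4m^2$ we have $\sqrt u>2m>4m/e$, so $(4m/e)\sqrt u<\sqrt u\cdot\sqrt u=u$, contradicting $u<2m\log u$. This kills the assumption $x\ge 2^mT(\log T)^m$, proving the lemma. I expect the main obstacle to be quantitative rather than conceptual: a lazy bound such as $\log u<2\sqrt u$ would only yield the contradiction for $u>16m^2$, so the logarithm estimate must be sharp enough to match the exact threshold $(4m^2)^m$ hard-coded into the hypothesis; everything else is routine algebra with $m$-th roots.
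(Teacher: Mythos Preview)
The paper does not supply its own proof of this lemma: it is quoted verbatim from \cite{GL} and used as a black box in Sections~\ref{Sec3} and~\ref{Sec4}. So there is no argument in the paper to compare yours against.

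Your proof is correct. The bootstrap chain
\[
x\ge 2^mT(\log T)^m \ \Longrightarrow\ \log x>2\log T \ \Longrightarrow\ T<(\log x)^m \ \Longrightarrow\ x<(\log x)^{2m}
\]
is valid (all $m$-th roots are of positive quantities since $T>4$ and, under the contradiction hypothesis, $x>8$), and the endgame via $\log w\le w/e$ applied at $w=\sqrt u$ is sharp enough: from $u>4m^2$ you get $\sqrt u>2m>4m/e$, hence $2m\log u\le (4m/e)\sqrt u<u$, contradicting $u<2m\log u$. Your remark that a cruder bound like $\log u<2\sqrt u$ would only work above $16m^2$ is accurate and explains why the constant $(4m^2)^m$ in the hypothesis is essentially optimal for this style of argument.
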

SageMath 9.5 is used to perform all computations in this work.

\section{Proof of Theorem \ref{1.1}.}\label{Sec3}
In this section, we prove Theorem \ref{1.1}. To do this, we first state and prove some preliminary results. We start with the following.
\subsection{An upper bound on $n$ in terms of $s$ and $k$.}\label{subsec3.1}
\begin{lemma}\label{lem3.1}
Let $n \geq k + 1$ and $L_n^{(k)}=p_1^{\beta_1} \ldots p_s^{\beta_s} $, be the prime factorization of \( L_n^{(k)} \) with $\beta_i \geq 0$, for all $i=1,2,\ldots,s$. Then
	\[ \log n < 35s \log s + 3s \log k + 3 \log(12s + k). \]
	
\end{lemma}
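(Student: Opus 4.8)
The plan is to combine the Binet-like formula \eqref{eq:Lnwitherror} with Matveev's theorem applied to a cleverly chosen linear form, and then feed the resulting bound into the analytic Lemma \ref{Guz}. First I would write $L_n^{(k)} = p_1^{\beta_1}\cdots p_s^{\beta_s}$ and consider the quantity $\Lambda := L_n^{(k)} \big/ \big(f_k(\alpha)(2\alpha-1)\alpha^{n-1}\big) - 1$, which by \eqref{eq:Lnwitherror} satisfies $|\Lambda| \le |e_k(n)| / \big(f_k(\alpha)(2\alpha-1)\alpha^{n-1}\big) < 1.5 / (\tfrac12 \cdot \alpha^{n-1}) \le 3\alpha^{-(n-1)}$, using $f_k(\alpha) > 1/2$ from \eqref{eq2.5} and $2\alpha-1 > 1$. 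Rewriting $L_n^{(k)} = \prod p_i^{\beta_i}$, this $\Lambda$ has the shape $e^{\Lambda'} - 1$ for a linear form in the $t = s+2$ logarithms $\log p_1, \ldots, \log p_s, \log\alpha, \log(f_k(\alpha)(2\alpha-1))$, with integer coefficient vector $(\beta_1, \ldots, \beta_s, -(n-1), -1)$. Note $\Lambda \ne 0$: this is where Lemma \ref{lem2.1} is crucial, since if $\Lambda = 0$ then $f_k(\alpha)(2\alpha-1)\alpha^{n-1} = L_n^{(k)} \in \mathbb{Z}$, forcing $f_k(\alpha)$ to be (close to) an algebraic integer after clearing $\alpha$-powers, contradicting that it is not an algebraic integer. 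I would spell this nonvanishing argument out carefully as it is the one genuinely delicate point.

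Next I would collect the ingredients for Matveev. The field is $\mathbb{K} = \mathbb{Q}(\alpha)$ with $D = k$. The heights: $h(p_i) = \log p_i \le \log p_s$ — but more usefully, I would bound $\sum \beta_i \log p_i = \log L_n^{(k)} < (n+1)\log 2 < n$ (using $\alpha < 2$), and exploit that the product $A_1 \cdots A_s$ can be controlled via $p_i \le$ the $s$-th prime and $h(p_i) = \log p_i$; here $A_i = k\log p_i$ works since $k \ge 2$. For the $\alpha$-term, $h(\alpha) = \tfrac1k\log\alpha < \tfrac{\log 2}{k}$, so $A_{s+1} = \log 2$ suffices; and $h(f_k(\alpha)(2\alpha-1)) \le h(f_k(\alpha)) + h(2\alpha-1) < 3\log k + h(\alpha) + \log 2 + h(2) \ll \log k$, giving $A_{s+2}$ of size $O(k\log k)$. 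For $B$ we take $B = n+1$ since all $|\beta_i| < n+1$ by \eqref{eq2.7} and $n - 1 < n+1$. Matveev then yields a lower bound $\log|\Lambda| > -C(s) \cdot k^2 (1+\log k)\log n \cdot \big(\prod_{i=1}^s \log p_i\big) \cdot \log k$ for an explicit constant $C(s)$ with $\log C(s)$ roughly $(s+5)\log 30 + 4.5\log(s+2)$, i.e. $O(s\log s)$. The product $\prod_{i=1}^{s}\log p_i$ is at most $(\log p_s)^s$, and $p_s$ is the largest prime factor, which is itself $< L_n^{(k)} < 2^{n+1}$; combined with the prime number theorem bound $p_s = O(s\log s)$ one gets $\prod \log p_i \le (\log(Cs\log s))^s$ — but actually the cleaner route, used in the Erdős-style arguments, is to bound $\prod_{i=1}^s \log p_i$ crudely in terms of $s$ alone via $p_s \le$ something like $12s^2$, contributing a factor whose logarithm is $O(s\log s)$ after taking logs.

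Comparing with the upper bound $\log|\Lambda| < \log 3 - (n-1)\log\alpha$, and using $1/\log\alpha < 2.1$ from \eqref{eq2.8}, I get $n \ll 2.1\big(\log 3 + C(s) k^2(1+\log k)(\log k)(\log n)\prod\log p_i\big)$, so $n / \log n$ is bounded by an expression $E$ whose logarithm is $35 s\log s + 3s\log k + 3\log(12s+k) + o(\cdots)$ once all the $O$-constants are made explicit and the prime-product is estimated. Applying Lemma \ref{Guz} with $m = 1$ (checking $E > 4$ and $E > n/\log n$) gives $n < 2E\log E$, and taking logarithms collapses this to $\log n < \log E + \log\log E + \log 2 < 35 s\log s + 3s\log k + 3\log(12s+k)$ after absorbing lower-order terms into the constants — the numerical constant $35$ being chosen precisely to swallow the $\log 30$, $\log 4.5$, the factor-of-$2$ slack, and the $\log\log$ correction. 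The main obstacle, and the part requiring the most care, is tracking all of Matveev's constants and the bound on $\prod_{i=1}^s \log p_i$ tightly enough that everything fits under the stated right-hand side; the nonvanishing of $\Lambda$ via Lemma \ref{lem2.1} is conceptually the sharpest point but is short once set up correctly.
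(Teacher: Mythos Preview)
Your overall architecture matches the paper's: estimate $|L_n^{(k)}/(f_k(\alpha)(2\alpha-1)\alpha^{n-1})-1|$ via \eqref{eq:Lnwitherror}, hit it with Matveev's theorem, then feed the result into Lemma~\ref{Guz} and take logarithms. The cosmetic differences (you package $f_k(\alpha)(2\alpha-1)$ as one algebraic number, getting $t=s+2$, while the paper keeps them separate with $t=s+3$; you apply Lemma~\ref{Guz} with $m=1$ keeping the $(\log k)^2$ factor inside $T$, while the paper first uses $k<n+1$ to convert $(\log k)^2$ into $(\log(n+1))^2$ and then takes $m=3$) are both legitimate and lead to the same shape of bound. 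The numerical bookkeeping you defer would indeed fit under the stated constants.

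The genuine gap is the nonvanishing argument. Your sketch says that $\Lambda=0$ would force $f_k(\alpha)(2\alpha-1)\alpha^{n-1}\in\mathbb{Z}$, and then ``$f_k(\alpha)$ is (close to) an algebraic integer,'' invoking Lemma~\ref{lem2.1}. But this does not follow: $\alpha$ is a unit (the constant term of $\Psi_k$ is $-1$), so you only get that $f_k(\alpha)(2\alpha-1)$ is an algebraic integer, and since $2\alpha-1$ need not be a unit you cannot conclude that $f_k(\alpha)$ itself is one. Lemma~\ref{lem2.1} says nothing about $f_k(\alpha)(2\alpha-1)$. The paper instead argues by conjugation: if $p_1^{\beta_1}\cdots p_s^{\beta_s}=f_k(\alpha)(2\alpha-1)\alpha^{n-1}$, apply an automorphism of the splitting field sending $\alpha\mapsto\alpha_i$ for some $i\ge 2$; the left side is rational hence fixed, while the right side becomes $f_k(\alpha_i)(2\alpha_i-1)\alpha_i^{n-1}$, whose absolute value is shown to be strictly below $6$, contradicting $L_n^{(k)}\ge 6$ for $n\ge 4$ (with the small remaining cases checked directly). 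You correctly flagged this step as delicate, but the route you propose does not close it; replace it with the conjugation-and-size argument.
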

\begin{proof}
We use Theorem \ref{thm:Mat} to get the inequality in Lemma \ref{lem3.1}. Because of our earlier deduction that $P(L_n^{(k)})\le 3$ for all $n\le k$, we can  assume that $n\ge k+1$. Moreover, the main result in \cite{Rih} tells us that the only $k$--generalized Lucas numbers that are powers of 2 are $L_0^{(k)}=2$, $L_1^{(k)}=1$ (for any $k\ge 2$), $L_3^{(2)}=2^2$ and $L_7^{(3)}=2^6$. So, we may further assume that $s\ge 2$ and $n\ge k+1\ge 3$.

Now, by the prime factorization of $L_n^{(k)}$ and \eqref{eq2.6}, we have 
\begin{align}\label{eq3.1}
	\left|p_1^{\beta_1} \ldots p_s^{\beta_s}-f_k(\alpha)(2\alpha-1)\alpha^{n-1}\right|<\dfrac{3}{2}.
\end{align}
Dividing both sides by $f_k(\alpha)(2\alpha-1)\alpha^{n-1}$, which is positive because $\alpha>1$, we get
\begin{align}\label{eq3.2}
	\left|p_1^{\beta_1} \ldots p_s^{\beta_s}\cdot(2\alpha-1)^{-1}\cdot\alpha^{-(n-1)}\cdot(f_k(\alpha))^{-1}-1\right|&<\dfrac{3}{2f_k(\alpha)(2\alpha-1)\alpha^{n-1}}\nonumber\\
	&<\dfrac{6}{(2\alpha-1)\alpha^{n-1}}\nonumber\\
	&<\dfrac{6}{\alpha^{n-1}},
\end{align}
where in the second inequality, we used relation \eqref{eq2.5}; i.e., $f_k(\alpha)>1/2$. Let 
$$
\Gamma=p_1^{\beta_1} \ldots p_s^{\beta_s}\cdot(2\alpha-1)^{-1}\cdot\alpha^{-(n-1)}\cdot(f_k(\alpha))^{-1}-1=e^{\Lambda}-1.
$$
Notice that $\Lambda\ne 0$, otherwise we would have
\begin{align}
\label{eq:case}
p_1^{\beta_1} \ldots p_s^{\beta_s} &= (2\alpha-1)\alpha^{n-1}f_k(\alpha)\nonumber\\
&=\dfrac{\alpha-1}{2+(k+1)(\alpha-2)}	(2\alpha-1)\alpha^{n-1}.
\end{align}
Conjugating the above relation by some automorphism of the Galois group of the splitting field of $\Psi_k (x)$ over $\mathbb{Q}$ which sends $\alpha$ to $\alpha_i$ for some $i>1$ and then taking absolute values, we get 
\begin{align}\label{eq3.3}
	p_1^{\beta_1} \ldots p_s^{\beta_s} 
	&=\left|\dfrac{\alpha_i-1}{2+(k+1)(\alpha_i-2)}	(2\alpha_i-1)\alpha_i^{n-1}\right|.
\end{align}
Note that from \eqref{eq3.3}, we have that $|2+(k+1)(\alpha_i-2)|\ge (k+1)|\alpha_i-2|-2>k-1$, as shown on page 1355 of \cite{Brl}. Hence, the right-hand side of \eqref{eq3.3} becomes
\begin{align*}
6 &=\min\{L_4^{(3)},L_4^{(2)}\}\le L_n^{(k)}=\left|\dfrac{\alpha_i-1}{2+(k+1)(\alpha_i-2)}(2\alpha_i-1)	\alpha_i^{n-1}\right| < \dfrac{|\alpha_i-1|\cdot|2\alpha_i-1|\cdot |\alpha_i|^{n-1}}{k-1}\le\dfrac{2\cdot 3\cdot 1}{k-1}\\
& <6,
\end{align*} 
for $k\ge 3$ (so, $n\ge k+1\ge 4$), or $k=2$ and $n\ge 4$, a contradiction. One can check directly that \eqref{eq:case} does not hold for the remaining case $k=2,~n=3$. So, $\Lambda\ne 0$. 

The algebraic number field containing the following $\gamma_i$'s is $\mathbb{K} := \mathbb{Q}(\alpha)$. We have $D = k$, $t := s+3$,
\begin{equation}\nonumber
	\begin{aligned}
		\gamma_{i}&:=p_i~~ \text{for}~i=1,2,\ldots, s, ~~~\gamma_{s+1}:=2\alpha-1, ~~~\gamma_{s+2}:=\alpha,~~~\gamma_{s+3}:=f_k(\alpha),\\
		b_{i}&:=\beta_i~~ \text{for}~i=1,2,\ldots, s, ~~~b_{s+1}:=-1, ~~~b_{s+2}:=-(n-1),~~~b_{s+3}:=-1.
	\end{aligned}
\end{equation}
Since $h(\gamma_{i})=\log p_i \le \log p_s$ for all $i=1,2,\ldots, s$, we take $A_i:=k \log p_s$ for all $i=1,2,\ldots, s$. Furthermore, $h(\gamma_{s+1}) <3/k$, for all $k\ge 2$, so we take $A_{s+1}:=3$. Additionally, $h(\gamma_{s+2})=(\log \alpha)/k <0.7/k$, so we take $A_{s+2}:=0.7$. Lastly, $h(\gamma_{s+3})<3\log k$ by relation \eqref{eq2.9}. Hence, we take $A_{s+3}:=3k\log k$.

Next, $B \geq \max\{|b_i|:i=1,2,\ldots, s,\ldots, s+3\}$. Notice that $b_i=\beta_i<n+1$, for all $i=1,2,\ldots, s$ by relation  \eqref{eq2.7}, so we take $B:=n+1$. Now, by Theorem \ref{thm:Mat},
\begin{align}\label{eq3.4}
	\log |\Gamma| &> -1.4\cdot 30^{s+6} \cdot(s+3)^{4.5}\cdot k^2 (1+\log k)(1+\log (n+1))\cdot (k\log p_s)^s\cdot 3\cdot 0.7\cdot 3k\log k\nonumber\\
	&> -1.4\cdot 30^{s}\cdot 30^6 \cdot s^{4.5}\cdot 3^{4.5}\cdot k^2 \cdot 3\log k\cdot 2\log (n+1)\cdot k^s(\log p_s)^s\cdot 3\cdot 0.7\cdot 3k\log k\nonumber\\
	&> -5.5\cdot 10^{12} \cdot 30^s s^{4.5}k^{3+s}(\log k)^2(\log p_s)^s \log (n+1).
\end{align}
Comparing \eqref{eq3.2} and \eqref{eq3.4}, we get
\begin{align*}
	(n-1)\log \alpha-\log 6&<5.5\cdot 10^{12} \cdot 30^s s^{4.5}k^{3+s}(\log k)^2(\log p_s)^s \log (n+1),
\end{align*}
which leads to
\begin{align*}
	n-1&<1.2\cdot 10^{13} \cdot 30^s s^{4.5}k^{3+s}(\log k)^2(\log p_s)^s \log (n+1),
\end{align*}
and adding $2$ to both sides yields
\begin{align}\label{eq3.5}
	n+1&<1.21\cdot 10^{13} \cdot 30^s s^{4.5}k^{3+s}(\log k)^2(\log p_s)^s \log (n+1).
\end{align}
Now, recall our assumption that $n\ge k+1$ implies $k\le n-1<n+1$. Moreover, the inequality $p_m < m^2$ holds for all $m\ge 2$. This is the Corollary to Theorem 3 on page 69 of \cite{Ros}. With these, inequality \eqref{eq3.5} becomes
\begin{align*}
	\dfrac{n+1}{(\log (n+1))^3}&<1.21\cdot 10^{13}  s^{4.5}k^{3+s}(60\log s)^s .
\end{align*}
We apply Lemma \ref{Guz} with $x:=n+1$, $m:=3$ and $T:=1.21\cdot 10^{13}  s^{4.5}k^{3+s}(60\log s)^s >(4m^2)^m=46656$. We get 
\begin{align*}
	n+1&<2^3\cdot 1.21\cdot 10^{13}  s^{4.5}k^{3+s}(60\log s)^s (\log (1.21\cdot 10^{13}  s^{4.5}k^{3+s}(60\log s)^s))^3\nonumber\\
	&= 9.68\cdot 10^{13}  s^{4.5}k^{3+s}(60\log s)^s \left(\log (1.21\cdot 10^{13})+4.5\log  s+(3+s)\log k+s\log(60\log s)\right)^3\nonumber\\
	&< 9.68\cdot 10^{13}  s^{4.5}k^{3+s}(60\log s)^s \cdot s^3\left(\dfrac{31}{s}+\dfrac{4.5}{s}\log  s+\left(1+\dfrac{3}{s}\right)\log k+\log(60\log s)\right)^3\nonumber\\
	&<9.68\cdot  10^{13}  s^{7.5}k^{3+s}(60\log s)^s (12s+k)^3,
\end{align*}
where we have used the fact that $(1+3/s)\log k<k$ for $k\ge 2$, $s\ge 2$ and 
$$(31/s)+(4.5/s)\log s+\log(60\log s)<12s\qquad {\text{\rm for}}\qquad s\ge 2.
$$
Therefore,
\begin{align*}
	n	&<9.7\cdot  10^{13}  s^{7.5}k^{3+s}(60\log s)^s (12s+k)^3,
\end{align*}
and hence
\begin{align*}
	\log n	&<\log (9.7\cdot  10^{13})+7.5\log s +(3+s)\log k +s\log(60\log s)+3\log (12s+k)\\
	&<33+7.5\log s +s\log(60\log s)+3s\log k+3\log (12s+k), ~~\text{since}~ 3+s<3s ~~\text{for all}~s\ge 2,\\
	&=s \log s \left(\dfrac{33}{s\log s}+\dfrac{7.5}{s}+\dfrac{\log(60\log s)}{\log s}\right)+ 3s \log k + 3 \log(12s + k),\\
	&<35s \log s + 3s \log k + 3 \log(12s + k).
\end{align*}
This completes the proof of Lemma \ref{lem3.1}.
\end{proof}
To proceed, observe that if $k\le s$, then Lemma \ref{lem3.1} implies that
\begin{align*}
	\log n	&<38s \log s  + 3 \log(13s)\\
	&<s\log s \left(38+\dfrac{8}{s\log s}+\dfrac{3}{s}\right)\\
	&<46s\log s,
\end{align*}
for $s\ge 2$. Using the well-known fact that $p_s>s\log s$, which is relation (3.12) from page 69 of \cite{Ros}, we have that
\begin{align*}
p_s>s\log s>\dfrac{1}{46}\log n.
\end{align*}
We therefore assume that $s<k$ for the remainder of this section. With this assumption, the conclusion of Lemma \ref{lem3.1} becomes
\begin{align}\label{eq3.6}
	\log n	&<38s \log k  + 3 \log(13k)\nonumber\\
	&<s\log k \left(38+\dfrac{8}{s\log k}+\dfrac{3}{s}\right)\nonumber\\
	&<46s\log k,
\end{align}
for $s\ge 2$ and $k\ge 2$. We proceed by distinguishing between two cases.
\subsection{The case $n\ge 2^{k/2}$}
Here, we have that 
\begin{align*}
\dfrac{k}{2}\log 2 \le \log n,	
\end{align*}
so that 
\begin{align*}
k \le \dfrac{2}{\log 2}\log n<\dfrac{2}{\log 2}\cdot 46s\log k<133s\log k.	
\end{align*}
From the above, we have $k/\log k < 133s$. We apply Lemma \ref{Guz} with the data: $x:=k$, $m:=1$ and $T:=133s>(4m^2)^m=4$, for all $s\ge 2$. We get
\begin{align}\label{eq3.7}
	k<2\cdot 133s\log(133s)=266s(\log 133+\log s)\le 266s\log s\left(\frac{\log 133}{\log 2}+1\right)<2143s\log s.
\end{align}
Therefore, 
\begin{align}\label{eq3.8}
	\log k<\log 2143+\log s+\log \log s<\left(\dfrac{\log 2143}{\log 2}+2\right)s\log s<14\log s,
\end{align}
holds for $s\ge 2$. Finally, we use Lemma \ref{lem3.1} again together with relations \eqref{eq3.7} and inequality \eqref{eq3.8} to conclude that 
\begin{align*}
	\log n&<  35s \log s + 3s\cdot 14 \log s + 3 \log(12s + 2143s\log s)<77s\log s+3 \log(2155s^2)\\
	& =77s\log s+6\log s+\log(2155) <s\log s\left(77+\frac{6}{s}+\frac{\log(2155)}{s\log s}\right)\\
	&<86s\log s,
\end{align*}
for $s\ge 2$. In the above, we used that $\log s<s$. Consequently, $p_s>s\log s>(1/86)\log n$ in this case.

\subsection{The case $n< 2^{k/2}$}\label{subsec3.3}
Let $\lambda > 0$ be such that $\alpha + \lambda = 2$. Since $2(1 - 2^{-k})<\alpha<2$, then we get that $\lambda < 2 - 2(1 - 2^{-k}) = 1/2^{k-1}$. That is, $\lambda \in (0, 1/2^{k-1})$. Moreover,	
	\begin{align*}
		\alpha^{n-1} &= (2 - \lambda)^{n-1} = 2^{n-1} \left(1 - \frac{\lambda}{2}\right)^{n-1} \\
		&= 2^{n-1}\left(e^{\log(1-\lambda/2)}\right)^{n-1}\\ 
		&\ge 2^{n-1}e^{-\lambda(n-1)}\\
		&\ge  2^{n-1}(1-\lambda(n-1)) ,
	\end{align*}
where we used the fact that $\log(1 - x) > -2x$ for all $x < 1/2$ and $e^{-x} \ge 1 - x$ for all $x \in \mathbb{R}$. 
	
Furthermore, 
$$\lambda(n - 1) < \dfrac{n - 1}{2^{k-1} }< \dfrac{2^{k/2}}{2^{k-1}} = 2^{k/2-1},$$
implying that $\alpha^{n-1} > 2^{n-1}(1 - 2^{k/2-1})$.
	It follows since $2(1 - 2^{-k})<\alpha<2$ that
	\[
	2^{n-1} - \frac{2^n}{2^{k/2}} < \alpha^{n-1} < 2^{n-1} <2^{n-1}+ \frac{2^n}{2^{k/2}},
	\]
	or
	\begin{equation} \label{eq3.9}
		\left| \alpha^{n-1} - 2^{n-1} \right| < \frac{2^n}{2^{k/2}}.
	\end{equation}
Next, consider the function $f_k(x)$ given at \eqref{eq:fk}. By the Mean-Value Theorem, there exists some $\omega \in (\alpha, 2)$ such that $f_k(\alpha) = f_k(2) + (\alpha - 2)f_k'(\omega)$. Observe that when $k \geq 2$, we obtain
	\begin{align*}
	|f_k'(\omega)| &= \dfrac{k-1}{(2 + (k + 1)(\omega - 2))^2 }\\
	&< \dfrac{k-1}{(2 - (k + 1)/2^{k-1})^2},~~\text{since}~\omega\in (\alpha,2)\subset (2-1/2^{k-1},2),\\
	&<k
	\end{align*}
	for $k\ge 3$, since $2^{k-1}\ge k+1$ for $k\ge 3$. It can be checked that the same holds for $k=2$. 
Hence,
	\begin{equation} \label{eq3.10}
		|f_k(\alpha) - f_k(2)| = |\alpha - 2||f'_k(\omega)| = \lambda |f'_k(\omega)| < k\lambda<\dfrac{k}{2^{k-1}}=\dfrac{2k}{2^{k}}.
	\end{equation}
Finally here, $2\alpha-1=2(2-\lambda)-1=3-2\lambda<3$ and 
\begin{align*}
	2\alpha-1=3-2\lambda>3-2\cdot \dfrac{1}{2^{k-1}}=3-\dfrac{4}{2^k},
\end{align*}
implying that 
\begin{align}\label{eq3.11}
	|(2\alpha-1)-3|<\dfrac{4}{2^k}.
\end{align}
From the above, if we write 
	\[
	\alpha^{n-1} = 2^{n-1} + \delta, \qquad f_k(\alpha) = f_k(2) + \eta\quad \text{and} \quad 2\alpha-1=3+\phi ,
	\]
then inequalities \eqref{eq3.9}, \eqref{eq3.10} and \eqref{eq3.11} become
	\begin{equation} \label{eq3.12}
		|\delta| < \frac{2^n}{2^{k/2}},\quad |\eta| < \frac{2k}{2^k} \quad \text{and} \quad |\phi|<\dfrac{4}{2^k} .
	\end{equation}
Moreover, since $f_k(2) = 1/2$ for all $k \geq 2$, we have
	\begin{align} \label{eq3.13}
		f_k(\alpha) \alpha^{n-1}(2\alpha-1) &=\left( f_k(2) + \eta\right) (2^{n-1} + \delta)(3+\phi)\nonumber\\
		& = \left(2^{n-2} + \frac{\delta}{2} + 2^{n-1}\eta + \eta \delta\right)(3+\phi)\nonumber\\
		&=3\cdot 2^{n-2} + \frac{3}{2}\delta + 3\cdot2^{n-1}\eta + 3\eta \delta+2^{n-2}\phi + \frac{\delta}{2}\phi + 2^{n-1}\eta\phi + \eta \delta\phi.
	\end{align}
Therefore, using \eqref{eq2.6} and relations \eqref{eq3.12} and \eqref{eq3.13}, we get
	\begin{align*}
		\left|p_1^{\beta_1} \cdots p_s^{\beta_s} -3\cdot2^{n-2}\right| &= \left| \left(L^{(k)}_n - f_k(\alpha)\alpha^{n-1}(2\alpha-1)\right) +\left( \frac{3}{2}\delta + 3\cdot2^{n-1}\eta + 3\eta \delta+2^{n-2}\phi + \frac{\delta}{2}\phi + 2^{n-1}\eta\phi + \eta \delta\phi\right) \right| \\
		&< \frac{3}{2}+ \dfrac{3\cdot2^{n-1}}{2^{k/2}} + \frac{3\cdot2^{n}k}{2^{k}} + \frac{3\cdot2^{n+1}k}{2^{3k/2}}+\dfrac{2^n}{2^k}+\dfrac{2^{n+1}}{2^{3k/2}}+\dfrac{2^{n+2}k}{2^{2k}}+\dfrac{2^{n+3}k}{2^{5k/2}}\\
		&< 3\cdot2^{n-2}\left(\frac{1}{2^{n-1}}+ \dfrac{2}{2^{k/2}} + \frac{4k}{2^{k}} + \frac{8k}{2^{3k/2}}+\dfrac{4/3}{2^k}+\dfrac{8/3}{2^{3k/2}}+\dfrac{16k/3}{2^{2k}}+\dfrac{32k/3}{2^{5k/2}}\right)\\
		&<3\cdot2^{n-2}\left(\frac{1}{2^{k/2}}+ \dfrac{2}{2^{k/2}} + \frac{5}{2^{k/2}} + \frac{4}{2^{k/2}}+\dfrac{2}{2^{k/2}}+\dfrac{3}{2^{k/2}}+\dfrac{8}{2^{k/2}}+\dfrac{11}{2^{k/2}}\right)\\
		&=3\cdot2^{n-2}\frac{36}{2^{k/2}}.
	\end{align*}
In the above, we used that $k\le 2^{k-1}$ and that $k<(5/4) 2^{k/2}$ for $k\ge 2$.  Dividing both sides above by $3\cdot 2^{n-2}$, having in mind that $p_1=2$ and $p_2=3$, we get
	\begin{align} \label{eq3.14}
		\left|p_1^{\beta_1-n+2}\cdot p_2^{\beta_2-1} \cdots p_s^{\beta_s} -1\right|< \frac{36}{2^{k/2}}.
	\end{align} 
Now, we intend to apply Theorem \ref{thm:Mat} on the left-hand side of \eqref{eq3.14}. Let $$\Gamma_1=p_1^{\beta_1-n+2}\cdot p_2^{\beta_2-1} \cdots p_s^{\beta_s}-1=e^{\Lambda_1}-1.$$
Notice that $\Lambda_1\ne 0$, otherwise we would have
$L_n^{(k)}=p_1^{\beta_1} \ldots p_s^{\beta_s} = 3\cdot 2^{n-2}$ but since $n\ge k+1$, this contradicts \eqref{eq:32}. Here, $t := s$,
\begin{equation}\nonumber
	\begin{aligned}
		\gamma_{i}&:=p_i~~ \text{for}~i=1,2,\ldots, s,~~~
	b_1:=\beta_1-n+2,~~~b_2:=\beta_2-1~~~	b_{i}&:=\beta_i~~ \text{for}~i=3,\ldots, s.
	\end{aligned}
\end{equation}
The algebraic number field containing $\gamma_i$'s is $\mathbb{K} := \mathbb{Q}$, so we  take $D = 1$.
Since $h(\gamma_{i})=\log p_i \le \log p_s$ for all $i=1,2,\ldots, s$, we take $A_i:= \log p_s$ for all $i=1,2,\ldots, s$.

Again, $B \geq \max\{|b_i|:i=1,2,\ldots, s\}$. Notice that $b_i=\beta_i<n+1$, for all $i=1,2,\ldots, s$ by relation  \eqref{eq2.7}, so we take $B:=n+1$. Now, by Theorem \ref{thm:Mat},
\begin{align}\label{eq3.15}
	\log |\Gamma_1| &> -1.4\cdot 30^{s+3} \cdot s^{4.5}\cdot 1^2 (1+\log 1)(1+\log (n+1))\cdot (\log p_s)^s\nonumber\\
	&> -1.4\cdot 30^{s}\cdot 30^3 \cdot s^{4.5}\cdot 2\log 2 n\cdot (\log p_s)^s\nonumber\\
	&> -10^5 \cdot 30^s s^{4.5}\log n\cdot(\log p_s)^s .
\end{align}
Comparing \eqref{eq3.14} and \eqref{eq3.15}, we get
\begin{align*}
	\dfrac{k}{2}\log 2-\log 36 &<10^5 \cdot 30^s s^{4.5}\log n\cdot(\log p_s)^s\\
	k&<3\cdot 10^5 \cdot  s^{4.5}\log n\cdot(60\log s)^s, ~~\text{since}~p_s<s^2,\\
	&<3\cdot 10^5 \cdot  s^{4.5}(46s\log k)\cdot(60\log s)^s,
\end{align*}
since $\log n<46s\log k$ in \eqref{eq3.6}. Therefore, we can write
\begin{align*}
	\dfrac{k}{\log k}<1.4 \cdot 10^7 s^{5.5}\cdot(60\log s)^s.
\end{align*}
We again apply Lemma \ref{Guz} with the data: $x:=k$, $m:=1$ and $T:=7 \cdot 10^6 s^{5.5}\cdot(60\log s)^s>(4m^2)^m=4$, for $s\ge 2$. We get 
\begin{align*}
	k&<2^1\cdot 1.4 \cdot 10^7 s^{5.5}\cdot(60\log s)^s (\log (1.4 \cdot 10^7 s^{5.5}\cdot(60\log s)^s)^1\nonumber\\
	&= 2.8\cdot 10^{7}  s^{5.5}(60\log s)^s \left(\log (1.4\cdot 10^{7})+5.5\log  s+s\log(60\log s)\right)\nonumber\\
	&=2.8\cdot 10^{7}  s^{5.5}(60\log s)^s\cdot s\log s \left(\dfrac{17}{s\log s}+\dfrac{5.5}{s}+\dfrac{\log 60}{\log s}+\dfrac{\log\log s}{\log s}\right)\nonumber\\
	&<6\cdot 10^{8}s^{6.5}(60\log s)^s\log s.
\end{align*}
As a result, 
\begin{align}\label{eq3.16}
	\log k &<\log 6\cdot 10^{8}+6.5\log s+s\log(60\log s)+\log\log s\nonumber\\
	&< 21+6.5\log s+s\log60+s\log\log s+\log\log s\nonumber\\
	&=s\log s\left(\dfrac{21}{s\log s}+\dfrac{6.5}{s}+\dfrac{\log 60}{\log s}+\dfrac{\log \log s}{\log s}+\dfrac{\log \log s}{s\log s}\right)\nonumber\\
	&<26s\log s.
\end{align}
To finish the proof, recall we are treating the case when $n < 2^{k/2}$ , therefore $\log n <(k/2) \log 2 < k$. This and relation \eqref{eq3.16} tell us that $\log \log n < \log k < 26 s \log s$, hence $p_s > s \log s > (1/26) \log \log n$. This completes the proof of Theorem \ref{1.1}.

\section{Proof of Theorem \ref{1.2}}\label{Sec4}
We proceed in a way similar as in Section \ref{Sec3}. Specifically, we prove following estimates.
\begin{lemma}\label{lem4.1}
If $P (L_n^{(k)} ) \le 7$, then:
\begin{enumerate}[(a)]
	\item The inequality	$$n  < 1.4 \cdot 10^{27} k^7 (\log k)^3,$$ 	holds for $k \ge 2$ and $n \ge 3$.
	\item If $k > 1000$, then 
	\begin{center}
		$k < 1.64\cdot 10^{20}$	and
	$~n <4.6\cdot 10^{173}$.
	\end{center}
\end{enumerate}
\end{lemma}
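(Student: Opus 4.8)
The plan is to obtain (a) from a single application of Matveev's theorem (Theorem~\ref{thm:Mat}) to a linear form of exactly the shape used in the proof of Lemma~\ref{lem3.1}; the gain over Lemma~\ref{lem3.1} itself comes from the fact that the hypothesis $P(L_n^{(k)})\le 7$ forces the number $s$ of distinct prime factors of $L_n^{(k)}$ to satisfy $s\le 4$, so the length of the linear form is a bounded constant. Part~(b) will then follow by feeding (a) together with the ``$n<2^{k/2}$'' estimate \eqref{eq3.14} (already proved in Subsection~\ref{subsec3.3}) back into Matveev's theorem, this time over $\mathbb{Q}$.

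For part~(a): if $3\le n\le k$ then $L_n^{(k)}=3\cdot2^{n-2}$ by \eqref{eq2.2} and the inequality is trivial, so I would assume $n\ge k+1$ and write $L_n^{(k)}=2^{a}3^{b}5^{c}7^{d}$. Dividing \eqref{eq2.6} by $f_k(\alpha)(2\alpha-1)\alpha^{n-1}$ exactly as in \eqref{eq3.2} gives
\begin{equation*}
\left|2^{a}3^{b}5^{c}7^{d}\cdot(2\alpha-1)^{-1}\alpha^{-(n-1)}f_k(\alpha)^{-1}-1\right|<\frac{6}{\alpha^{n-1}},
\end{equation*}
and I would set $\Gamma:=2^{a}3^{b}5^{c}7^{d}(2\alpha-1)^{-1}\alpha^{-(n-1)}f_k(\alpha)^{-1}-1=e^{\Lambda}-1$. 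That $\Lambda\ne0$ is checked by the conjugation argument of Lemma~\ref{lem3.1} (a Galois automorphism $\alpha\mapsto\alpha_i$, $i\ge2$, together with $|2+(k+1)(\alpha_i-2)|>k-1$, forces $6\le L_n^{(k)}<6$; the isolated pair $k=2$, $n=3$ is handled directly). Then I would apply Theorem~\ref{thm:Mat} with $\mathbb{K}:=\mathbb{Q}(\alpha)$, $D:=k$, $t:=7$, taking $\gamma_i$ to be $2,3,5,7,\,2\alpha-1,\,\alpha,\,f_k(\alpha)$ with exponents $a,b,c,d,-1,-(n-1),-1$ and the heights $A_i:=k\log p_i$ ($1\le i\le 4$), $A_5:=3$, $A_6:=0.7$, $A_7:=3k\log k$ (from \eqref{eq2.9}), and $B:=n+1$ by \eqref{eq2.7} — the case $s<4$ is subsumed since both $t$ and the $A_i$ only grow. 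Comparing the resulting lower bound with $|\Gamma|<6/\alpha^{n-1}$ and using \eqref{eq2.8} yields, after collecting the numerical constants,
\begin{equation*}
n<C_1\,k^{7}(1+\log k)(\log k)(1+\log n)
\end{equation*}
for an explicit absolute $C_1$ (of size roughly $10^{20}$). Using $(1+\log k)(\log k)<3(\log k)^{2}$ and $1+\log n<2\log n$ (both valid for $k\ge2$, $n\ge3$) gives $n/\log n<C_2 k^{7}(\log k)^{2}$, so Lemma~\ref{Guz} with $m=1$ gives $n<2T\log T$ with $T=C_2 k^{7}(\log k)^{2}$; since $\log T<c\log k$ for all $k\ge2$ with a suitable absolute $c$, this yields $n<C_3 k^{7}(\log k)^{3}$, and keeping track of the constants shows that $C_3=1.4\cdot10^{27}$ works.

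For part~(b), I would assume $k>1000$ (taking $n\ge k+1$; the range $n\le k$ is the exceptional family of Theorem~\ref{1.2}). First I would rule out $n\ge2^{k/2}$: it would give $(k/2)\log2\le\log n$, while (a) gives $\log n<\log(1.4\cdot10^{27}k^{7}(\log k)^{3})$, which is $<18\log k$ for $k>1000$; hence $k/\log k<36/\log2<52$, and Lemma~\ref{Guz} with $m=1$ forces $k<411$, contradicting $k>1000$. So $n<2^{k/2}$, and I may invoke \eqref{eq3.14}, which with $L_n^{(k)}=2^{a}3^{b}5^{c}7^{d}$ reads
\begin{equation*}
\left|2^{a-n+2}3^{b-1}5^{c}7^{d}-1\right|<\frac{36}{2^{k/2}},
\end{equation*}
the associated $\Lambda_1$ being nonzero by \eqref{eq:32} since $n\ge k+1$. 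I would then apply Theorem~\ref{thm:Mat} over $\mathbb{Q}$ (so $D=1$) with $t=4$, $\gamma_i=p_i$, $A_i=\log p_i\le\log7$, $B=n+1$; comparing with $|\Gamma_1|<36/2^{k/2}$ gives $k<C_4(1+\log n)$ with an explicit $C_4$ of size about $10^{14}$. Substituting the bound on $\log n$ from (a) and using $k>1000$ to replace the right-hand side by a constant multiple of $\log k$, I get $k/\log k<C_5$, so Lemma~\ref{Guz} yields $k<1.64\cdot10^{20}$. Plugging this into (a) gives
\begin{equation*}
n<1.4\cdot10^{27}\,(1.64\cdot10^{20})^{7}\bigl(\log(1.64\cdot10^{20})\bigr)^{3}<4.6\cdot10^{173}.
\end{equation*}

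The hard part is not any single estimate but the circular dependence in (b): Matveev over $\mathbb{Q}$ bounds $k$ in terms of $\log n$, whereas (a) bounds $\log n$ in terms of $\log k$, and one must substitute one into the other and untangle the resulting inequality of the form $k/\log k<\mathrm{const}$ with Lemma~\ref{Guz}. Everything else is the careful but routine bookkeeping of the numerical constants needed to reach the specific values $1.4\cdot10^{27}$, $1.64\cdot10^{20}$, $4.6\cdot10^{173}$.
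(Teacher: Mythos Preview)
Your proposal is correct and follows essentially the same route as the paper: for (a) you apply Matveev with $t=s+3=7$ to the form \eqref{eq3.2} and finish with Lemma~\ref{Guz}, exactly as the paper does by substituting $s=4$ into \eqref{eq3.5}; for (b) you first force $n<2^{k/2}$ (the paper verifies $1.4\cdot10^{27}k^{7}(\log k)^{3}<2^{k/2}$ directly for $k>1000$, which is equivalent to your contradiction argument), then apply Matveev over $\mathbb{Q}$ to \eqref{eq3.14}, feed in the bound on $\log n$, and close with Lemma~\ref{Guz}. The only differences are cosmetic bookkeeping (your choice $A_i=\log p_i$ versus the paper's $A_i=\log 7$, and your use of part~(a) rather than \eqref{eq3.6} to bound $\log n$ in terms of $\log k$), none of which affects the structure or the final constants.
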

\begin{proof}~
\begin{enumerate}[(a)]
	\item To prove the first part, we use the same arguments used in Subsection \ref{subsec3.1}. Indeed, for $s=4$, we have from inequality \eqref{eq3.2} that
	\begin{align}\label{eq4.1}
		\left|2^{a}\cdot 3^b\cdot 5^c\cdot 7^d\cdot (2\alpha-1)^{-1}\cdot\alpha^{-(n-1)}\cdot(f_k(\alpha))^{-1}-1\right|
		&<\dfrac{6}{\alpha^{n-1}},
	\end{align}
for which we obtain as before, by substituting $s=4$ in \eqref{eq3.5}, that 	
	\begin{align*}
		n+1&<1.21\cdot 10^{13} \cdot 30^4 \cdot 4^{4.5}k^{3+4}(\log k)^2(\log 7)^4 \log (n+1)\\
		&<7.2\cdot 10^{24}k^7(\log k)^2 \log (n+1).
	\end{align*}
In particular, we have 
	\begin{align*}
	\dfrac{n+1}{\log(n+1)}
	&<7.2\cdot 10^{24}k^7(\log k)^2 .
\end{align*}
We now apply Lemma \ref{Guz} with  $x:=n+1$, $m:=1$, $T:=7.2\cdot 10^{24}k^7(\log k)^2 >(4m^2)^m=4$ for $k\ge 2$. We get 
\begin{align*}
	n+1&<2\cdot 7.2\cdot 10^{24}k^7(\log k)^2 \log (7.2\cdot 10^{24}k^7(\log k)^2)\\
	&=1.44\cdot 10^{25}k^7(\log k)^2 \left(\log (7.2\cdot 10^{24})+7\log k+2\log\log k\right)\\
	&=1.44\cdot 10^{25}k^7(\log k)^3 \left(\dfrac{58}{\log k}+7+\dfrac{2\log\log k}{\log k}\right)\\
	&<1.4\cdot 10^{27}k^7(\log k)^3.
\end{align*}
\item In the second part, if $k>1000$, then 
\begin{align*}
	n	<1.4\cdot 10^{27}k^7(\log k)^3<2^{k/2}.
\end{align*}
For this reason, we can use the same arguments from Subsection \ref{subsec3.3} relation \eqref{eq3.14} to write 
	\begin{align} \label{eq4.2}
	\left|2^{a-n+2}\cdot 3^{b-1}\cdot 5^c\cdot 7^d -1\right|< \frac{36}{2^{k/2}},
\end{align}
from which after applying Matveev's result with $s=4$ as in Subsection \ref{subsec3.3} we get
\begin{align*}
	k<3\cdot 10^{8}\cdot 4^{6.5}(60\log 4)^4\log 4<1.64\cdot 10^{20}.
\end{align*}
Lastly, we substitute this upper bound on $k$ in part (a) of Lemma \ref{lem4.1} to get
\begin{align*}
	n  &< 1.4 \cdot 10^{27}(1.64\cdot 10^{20})^7 (\log 1.64\cdot 10^{20})^3<4.6\cdot 10^{173}.
\end{align*}
\end{enumerate}
This completes the proof of Lemma \ref{lem4.1}.
\end{proof}
To complete the proof of Theorem \ref{1.2}, we proceed in two cases, that is, the case $k\le1000$ and the case $k>1000$. We use similar analyses given on pages 1363 and 1364 of \cite{Brl}.

\subsection{The case $k\le 1000$.}
In this subsection, we treat the cases when $k \in [2, 1000]$. Note that when $k\le 1000$, then $n< 4.62\cdot 10^{50}$ by Lemma \ref{lem4.1}. The next step is to reduce this large upper bound on $n$. To do this, we let
\[
\tau_1 := a \log 2 + b \log 3 + c \log 5 + d \log 7-\log (2\alpha-1) - (n - 1) \log \alpha - \log f_k(\alpha),
\]
so that \eqref{eq4.1} can be rewritten as
\begin{equation}\label{eq4.3}
	|e^{\tau_1} - 1| < \frac{6}{\alpha^{n-1}}.
\end{equation}
Observe that $\tau_1 \neq 0$. Moreover, if $\tau_1 > 0$, then $e^{\tau_1} - 1 > 0$, so from \eqref{eq4.3} we obtain
\[
0 < \tau_1 < \frac{6}{\alpha^{n-1}},
\]
where we used the fact that $x \leq e^x - 1$ for all $x \in \mathbb{R}$. Next, we treat the case $\tau_1 < 0$. Note that if $n \geq 7$, then $6/\alpha^{n-1} < 1/2$ for all $k \geq 2$. Thus, from \eqref{eq4.3}, we get that $|e^{\tau_1} - 1| < 1/2$ or $e^{\tau_1} < 2$. Since $\tau_1 < 0$, we obtain
\[
0 < |\tau_1| \leq e^{|\tau_1|} - 1 = e^{|\tau_1|}|e^{\tau_1} - 1| < \frac{12}{\alpha^{n-1}}.
\]
Thus, in all cases, the inequality
\begin{equation}
	|\tau_1| < \frac{12}{\alpha^{n-1}}
\end{equation}
holds for $k \geq 2$ and $n \geq 7$. Observe that $|\tau_1|$ is an expression of the form
\[
|x_1 \log 2 + x_2 \log 3 + x_3 \log 5 + x_4 \log 7 +x_5 \log (2\alpha-1)+ x_6 \log \alpha + x_7 \log f_k(\alpha)|,
\]
where $x_1 := a$, $x_2 := b$, $x_3 := c$, $x_4 := d$, $x_5 := -1$, $x_6 := -(n - 1)$, $x_7 := -1$ are integers with 
\[
\max\{|x_i| : 1 \leq i \leq 7\}  < n<1.4 \cdot 10^{27} k^7 (\log k)^3,
\]
where we used Lemma \ref{lem4.1}.

For each $k \in [2, 1000]$, we used the LLL--algorithm to compute a lower bound for the smallest nonzero number of the form $|\tau_1|$, with integer coefficients $x_i$ not exceeding $1.4 \cdot 10^{27} k^7 (\log k)^3$ in absolute value. Specifically, we consider the approximation lattice
$$ \mathcal{A}=\begin{pmatrix}
	1 & 0 & 0 & 0 & 0 & 0 & 0 \\
	0 & 1 & 0 & 0 & 0 & 0 & 0 \\
	0 & 0 & 1 & 0 & 0 & 0 & 0 \\
	0 & 0 & 0 & 1 & 0 & 0 & 0 \\
	0 & 0 & 0 & 0 & 1 & 0 & 0 \\
	0 & 0 & 0 & 0 & 0 & 1 & 0 \\
	\lfloor C\log 2\rfloor & \lfloor C\log 3\rfloor& \lfloor C\log 5 \rfloor& \lfloor C\log 7 \rfloor & \lfloor C\log(2\alpha-1) \rfloor & \lfloor C\log\alpha \rfloor & \lfloor C\log f_k(\alpha) \rfloor
\end{pmatrix} ,$$
with $C:= 10^{355}$ and choose $y:=\left(0,0,0,0,0,0,0\right)$. Now, by Lemma \ref{lem2.5}, we get $$l\left(\mathcal{L},y\right)^2>c_1^2=5.96\cdot10^{106}.$$
So, Lemma \ref{lem2.6} gives $S=1.5\cdot 10^{102}$ and $T=1.7\cdot 10^{51}$. Since $c_1^2\ge T^2+S$, then choosing $c_3:=12$ and $c_4:=\log\alpha$, we get $n-1\le 1448$.

Finally, we wrote a simple program in SageMath (see Appendix 1), to look at $k$-generalized Lucas numbers for $2 \leq k \leq 1000$ and $k + 1 \leq n \leq 1449$. Instead of factoring the numbers fully, we checked if they could be divided by $2$, $3$, $5$, and $7$ until we couldn't divide them anymore. This way, we found out if each number could be written using only these primes. The numbers we got are the ones given in Theorem \ref{1.2}. This completes the analysis in the case $k \in [2, 1000]$. 

\subsection{The case $k>1000$}
Lastly, we treat the case when $k > 1000$. At this point, we need to reduce our absolute upper bound on $k$, see Lemma \ref{lem4.1}, by using again the LLL--algorithm described in Lemma \ref{lem2.6}. To do this, let
\[
\tau_2 := (a - n + 2) \log 2 + (b-1) \log 3 + c \log 5 + d \log 7,
\]
so that we can rewrite \eqref{eq4.2} as
\begin{equation}\label{eq4.5}
	|e^{\tau_2} - 1| < \frac{36}{2^{k/2}}.
\end{equation}
Again, it is clear that $\tau_2 \neq 0$. If $\tau_1 > 0$, then $e^{\tau_1} - 1 > 0$, so from \eqref{eq4.5} we obtain
\[
0 < \tau_2 <  \frac{36}{2^{k/2}},
\]
by similar arguments as before. If $\tau_2 < 0$, then we can note from \eqref{eq4.5} that $36/2^{k/2} < 1/2$ for all $k>1000$. Hence, it follows from \eqref{eq4.5} that $|e^{\tau_2} - 1| < 1/2$ which implies $e^{\tau_2} < 2$. Since $\tau_2 < 0$, we obtain that
\begin{equation*}
	0 < |\tau_2| \leq e^{|\tau_2|} - 1 = e^{|\tau_2|}|e^{\tau_2} - 1| < \frac{72}{2^{k/2}}.
\end{equation*}
Thus, in all cases, we have 
\begin{equation}
	 |\tau_2| < \frac{72}{2^{k/2}}.
\end{equation}
Like before, observe that $|\tau_2|$ is an expression of the form
\[
|x_1 \log 2 + x_2 \log 3 + x_3 \log 5 + x_4 \log 7|,
\]
where $x_1 := a - n + 2$, $x_2 := b-1$, $x_3 := c$, $x_4 := d$. From the second part of Lemma \ref{lem4.1}, we have
\[
\max\{|x_i| : 1 \leq i \leq 4\} < n < 4.6 \times 10^{173}.
\]
At this point, we consider the approximation lattice
$$ \mathcal{A}=\begin{pmatrix}
	1 & 0 & 0 & 0  \\
	0 & 1 & 0 & 0  \\
	0 & 0 & 1 & 0  \\
	
	\lfloor C\log 2\rfloor & \lfloor C\log 3\rfloor& \lfloor C\log 5 \rfloor& \lfloor C\log 7 \rfloor 
\end{pmatrix} ,$$
with $C:= 10^{695}$ and choose $y:=\left(0,0,0,0\right)$. By Lemma \ref{lem2.5}, we get $$l\left(\mathcal{L},y\right)^2>c_1^2=10^{350}.$$
So, Lemma \ref{lem2.6} gives $S=8.5\cdot 10^{347}$ and $T=9.2\cdot 10^{173}$. Since $c_1^2\ge T^2+S$, then choosing $c_3:=72$ and $c_4:=\log 2$, we get $k/2\le 1733$. This implies that $k\le 3466$ and the first part of Lemma \ref{lem4.1} tells us that $n < 4.6 \cdot 10^{54}$.

With this new upper bound for $n$ we repeat the LLL--algorithm once again to get a lower bound of $|\tau_2|$, where now the coefficients $x_i$ are integers satisfying 
\[
\max\{|x_i|: 1 \leq i \leq 4\} < n < 4.6 \cdot 10^{54}.
\]
With the same approximation lattice and $C:=10^{220}$, we get $c_1^2=10^{111}$, $S=8.464\cdot 10^{109}$ and $T=9.2\cdot 10^{54}$. We then obtain that $k \leq 1106$. After repeating this process 2 more times, we finally find that $k <1000$, which is a contradiction. Thus, Theorem \ref{1.2} is proved. \qed

\section*{Acknowledgments} 
The first author thanks the Eastern Africa Universities Mathematics Programme (EAUMP) for funding his doctoral studies. The second author worked on this paper during a fellowship at STIAS in the second part of 2023.
This author thanks STIAS for hospitality and support.

\section*{Addresses}
$ ^{1} $ Department of Mathematics, School of Physical Sciences, College of Natural Sciences, Makerere University, Kampala, Uganda

 Email: \url{hbatte91@gmail.com}\\

\vspace{0.35cm}
\noindent 
$ ^{2} $ School of Mathematics, Wits University, Johannesburg, South Africa  

Email: \url{Florian.Luca@wits.ac.za}

\section*{Appendices}
\subsection*{Appendix 1}\label{app1}
\begin{verbatim}
	# Define a memoization dictionary to store previously computed values
	memo = {}
	
	def k_generalized_lucas_iterative(n, k):
	# Base cases
	if n == 0:
	return 2
	elif n == 1:
	return 1
	elif n < 2 - k:
	return 0
	
	# Check if we have already computed the value
	if (n, k) in memo:
	return memo[(n, k)]
	
	# Initialize a list with the base cases
	lucas_nums = [0] * (2 - k) + [2, 1] + [None] * (n - 1)
	
	# Compute the k-generalized Lucas numbers iteratively
	for i in range(2, n + 1):
	lucas_nums[i] = sum(lucas_nums[i - j] for j in range(1, k + 1) if i - j >= 0)
	# Store the computed number in the memo dictionary
	memo[(i, k)] = lucas_nums[i]
	
	return lucas_nums[n]
	
	def is_of_form_2a_3b_5c_7d(lucas_val):
	for prime in [2, 3, 5, 7]:
	while lucas_val % prime == 0:
	lucas_val //= prime
	return lucas_val == 1
	
	# Check for solutions
	for k in range(2, 1001):
	for n in range(k + 1, 1450):
	lucas_val = k_generalized_lucas_iterative(n, k)
	if is_of_form_2a_3b_5c_7d(lucas_val):
	print(f"For k={k}, n={n}: L_n^{(k)} = {lucas_val}")
\end{verbatim}

\end{document}